\newtheorem*{rep@theorem}{\rep@title}
\newcommand{\newreptheorem}[2]{%
\newenvironment{rep#1}[1]{%
 \def\rep@title{#2 \ref{##1}}%
 \begin{rep@theorem}}%
 {\end{rep@theorem}}}
\newtheorem{thm}{Theorem}[section]
\newtheorem{thmA}{Theorem}
\newtheorem{lem}[thm]{Lemma}
\newtheorem{prop}[thm]{Proposition}
\newtheorem*{thm*}{Theorem}
\newtheorem*{claim*}{Claim}
\theoremstyle{definition}
\newtheorem{defi}[thm]{Definition}
\newtheorem{rem}[thm]{Remark}
\newcommand{\mc}[1]{\mathcal{#1}}
\newcommand{\ms}[1]{\mathscr{#1}}
\newcommand{\mb}[1]{\mathbb{#1}}
\newcommand{\mf}[1]{\mathfrak{#1}}
\newcommand{\mr}[1]{\mathrm{#1}}
\newcommand{\C}{\mathbb{C}}
\newcommand{\R}{\mathbb{R}}
\newcommand{\Z}{\mathbb{Z}}
\newcommand{\la}{\langle}
\newcommand{\ra}{\rangle}
\renewcommand{\epsilon}{\varepsilon}
\renewcommand{\phi}{\varphi}
\renewcommand{\tilde}{\widetilde}
\newcommand{\cb}{\mr{cb}}
\DeclareMathOperator{\GL}{GL}
\DeclareMathOperator{\SL}{SL}
\DeclareMathOperator{\SO}{SO}
\DeclareMathOperator{\SU}{SU}
\DeclareMathOperator{\Sp}{Sp}
\DeclareMathOperator{\Spin}{Spin}
\DeclareMathOperator{\id}{id}
\newcommand{\FF}{\mr F_{4(-20)}}
\newcommand{\WA}{\mr{WA}}
\newcommand{\WH}{\mr{WH}}
\numberwithin{equation}{section}
\begin{document}
\selectlanguage{english}


\title{The weak {H}aagerup property II: {E}xamples}
\author{Uffe Haagerup}
\thanks{Both authors are supported by ERC Advanced Grant no.~OAFPG 247321 and the Danish National Research Foundation through the Centre for Symmetry and Deformation (DNRF92). The first author is also supported by the Danish Natural Science Research Council.}
\address{Department of Mathematical Sciences, University of Copenhagen,
\newline Universitetsparken 5, DK-2100 Copenhagen \O, Denmark}
\email{haagerup@math.ku.dk}

\author{S{\o}ren Knudby}
\address{Department of Mathematical Sciences, University of Copenhagen,
\newline Universitetsparken 5, DK-2100 Copenhagen \O, Denmark}
\email{knudby@math.ku.dk}
\date{September 20, 2014}

\begin{abstract}
The weak Haagerup property for locally compact groups and the weak Haagerup constant was recently introduced by the second author in \cite{K-WH}. The weak Haagerup property is weaker than both weak amenability introduced by Cowling and the first author in \cite{MR996553} and the Haagerup property introduced by Connes \cite{Connes-Kingston} and Choda \cite{MR718798}. 

In this paper it is shown that a connected simple Lie group $G$ has the weak Haagerup property if and only if the real rank of $G$ is zero or one. Hence for connected simple Lie groups the weak Haagerup property coincides with weak amenability. Moreover, it turns out that for connected simple Lie groups the weak Haagerup constant coincides with the weak amenability constant, although this is not true for locally compact groups in general.

It is also shown that the semidirect product $\R^2\rtimes\SL(2,\R)$ does not have the weak Haagerup property.
\end{abstract}
\maketitle
\setcounter{tocdepth}{1}

\section{Introduction}
Amenability is a fundamental concept for locally compact groups, see, for example, the books \cite{MR0251549}, \cite{MR767264}. In the 1980s, two weaker properties for locally compact groups were introduced, first the \emph{Haagerup property} by Connes \cite{Connes-Kingston} and Choda \cite{MR718798} and next \emph{weak amenability} by Cowling and the first author \cite{MR996553}. Both properties have been studied extensively (see \cite[Chapter~12]{MR2391387}, \cite{MR1852148} and \cite{MR2132866} and the references therein). It is well known that amenability of a locally compact group $G$ is equivalent to the existence of a net $(u_\alpha)_{\alpha\in A}$ of continuous, compactly supported, positive definite functions on $G$ such that $(u_\alpha)_{\alpha\in A}$ converges to the constant function $1_G$ uniformly on compact subsets of $G$.

\begin{defi}[\cite{Connes-Kingston},\cite{MR1852148}]
A locally compact group $G$ has the \emph{Haagerup property} if there exists a net $(u_\alpha)_{\alpha\in A}$ of continuous positive definite functions on $G$ vanishing at infinity such that $u_\alpha\to 1_G$ uniformly on compact sets.
\end{defi}

As usual we let $C_0(G)$ denote the continuous (complex) functions on $G$ vanishing at infinity and let $C_c(G)$ be the subspace of functions with compact support. Also, $B_2(G)$ denotes the space of Herz--Schur multipliers on $G$ with the Herz--Schur norm $\|\ \|_{B_2}$ (see Section~\ref{sec:prelim} for more details).

\begin{defi}[\cite{MR996553}]
A locally compact group $G$ is \emph{weakly amenable} if there exist a constant $C > 0$ and a net $(u_\alpha)_{\alpha\in A}$ in $B_2(G)\cap C_c(G)$ such that
\begin{align}\label{eq:WA-bound}
\|u_\alpha\|_{B_2} \leq C  \quad\text{ for every } \alpha\in A,
\\
u_\alpha\to 1 \text{ uniformly on compacts}.
\end{align}
The best possible constant $C$ in \eqref{eq:WA-bound} is called the \emph{weak amenability constant} denoted $\Lambda_\WA(G)$. If $G$ is not weakly amenable, then we put $\Lambda_\WA(G) = \infty$. The weak amenability constant $\Lambda_\WA(G)$ is also called the Cowling--Haagerup constant and denoted $\Lambda_\cb(G)$ or $\Lambda_G$ in the literature.
\end{defi}

The definition of weak amenability given here is different from the definition given in \cite{MR996553}, but the definitions are equivalent. In one direction, this follows from \cite[Proposition~1.1]{MR996553} and the fact that $A(G)\subseteq B_2(G)$, where $A(G)$ denotes the Fourier algebra of $G$ (see Section~\ref{sec:prelim}). In the other direction, one can apply the convolution trick (see \cite[Appendix~B]{K-WH}) together with the standard fact that $C_c(G) * C_c(G) \subseteq A(G)$.

\begin{defi}[\cite{MR3146826},\cite{K-WH}]
A locally compact group $G$ has the \emph{weak Haagerup property} if there exist a constant $C > 0$ and a net $(u_\alpha)_{\alpha\in A}$ in $B_2(G)\cap C_0(G)$ such that
\begin{align}\label{eq:WH-bound}
\|u_\alpha\|_{B_2} \leq C  \quad\text{ for every } \alpha\in A,
\\
u_\alpha\to 1 \text{ uniformly on compacts}.
\end{align}
The best possible constant $C$ in \eqref{eq:WH-bound} is called the \emph{weak Haagerup constant} denoted $\Lambda_\WH(G)$. If $G$ does not have the weak Haagerup property, then we put $\Lambda_\WH(G) = \infty$.
\end{defi}

Clearly, the weak Haagerup property is weaker than both the Haagerup property and weak amenability, and hence there are many known examples of groups with the weak Haagerup property. Moreover, there exist examples of groups that fail the first two properties but nevertheless have the weak Haagerup property (see \cite[Corollary~5.7]{K-WH}).

Our first result is the following theorem.
\begin{thmA}\label{thm:SL3}
The groups $\SL(3,\R)$, $\Sp(2,\R)$, and $\tilde\Sp(2,\R)$ do not have the weak Haagerup property.
\end{thmA}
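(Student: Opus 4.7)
The plan is to reduce all three cases to the fact, stated in the abstract and proved elsewhere in the paper, that $G_0 := \R^2 \rtimes \SL(2,\R)$ does not have the weak Haagerup property. Two hereditary principles for $\Lambda_{\WH}$ are the key tools, both expected to be available from \cite{K-WH}: $\Lambda_{\WH}(H) \le \Lambda_{\WH}(G)$ for any closed subgroup $H$ of $G$, and $\Lambda_{\WH}(G/N) \le \Lambda_{\WH}(G)$ whenever $N$ is a closed amenable normal subgroup of $G$. Granting these, it is enough to exhibit in each of the three groups a closed subgroup that has $G_0$ as a quotient by an amenable normal subgroup.

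For $\SL(3,\R)$ the block subgroup
\[
\left\{ \begin{pmatrix} A & v \\ 0 & 1 \end{pmatrix} : A \in \SL(2,\R),\ v \in \R^2 \right\}
\]
is closed and isomorphic to $G_0$, so the first hereditary property immediately gives $\Lambda_{\WH}(\SL(3,\R)) \ge \Lambda_{\WH}(G_0) = \infty$.

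For $\Sp(2,\R) = \Sp_4(\R)$ no closed subgroup is directly isomorphic to $G_0$ (the subalgebra $\mf{sl}_2 \ltimes \R^2$ with standard action does not embed in $\mf{sp}_4$). I would instead pass to the Heisenberg parabolic subgroup, whose Levi contains an $\SL(2,\R)$-factor acting standardly on the two-dimensional quotient of its unipotent radical $H_3$, the three-dimensional Heisenberg group. The resulting closed subgroup $J := H_3 \rtimes \SL(2,\R) \subset \Sp_4(\R)$ has the amenable closed normal subgroup $Z(H_3) \cong \R$, and $J/Z(H_3) \cong G_0$; combining the two hereditary principles yields $\Lambda_{\WH}(\Sp(2,\R)) \ge \Lambda_{\WH}(G_0) = \infty$. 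For the universal cover $\tilde\Sp(2,\R)$, the inclusion $J \hookrightarrow \Sp_4(\R)$ induces an isomorphism on $\pi_1$, so the identity component of the preimage of $J$ is a closed subgroup $\tilde J \cong H_3 \rtimes \tilde\SL(2,\R)$ of $\tilde\Sp(2,\R)$; two successive amenable quotients, first by $Z(H_3)$ and then by the discrete central kernel of $\tilde\SL(2,\R) \to \SL(2,\R)$, produce $G_0$, and the argument concludes as before.

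The main obstacle I anticipate is the quotient step by the noncompact amenable normal subgroup $Z(H_3) \cong \R$: a naive averaging of a Herz--Schur multiplier over $\R$ does not preserve the $C_0$-decay, so this step really needs the sharper hereditary statement for $\Lambda_{\WH}$ under quotients by amenable normal subgroups, rather than just by compact ones. If that version of (b) is not available in the form needed, the fallback is to prove directly that the Jacobi group $J$, and its lift $\tilde J$, fails the weak Haagerup property by adapting whatever argument in the paper establishes the same for $G_0$.
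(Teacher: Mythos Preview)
Your reduction for $\SL(3,\R)$ is correct, and the paper itself remarks (after Theorem~\ref{thm:WH-SL3}) that this is a valid alternative route to that case once Theorem~\ref{thm:SL2} is in hand.

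For $\Sp(2,\R)$ and $\tilde\Sp(2,\R)$, however, there is a genuine gap exactly where you suspected. The hereditary results available from \cite{K-WH} and used in this paper are: passage to closed subgroups (Theorem~A(1)) and passage to quotients by \emph{compact} normal subgroups (Theorem~A(2), Proposition~5.4). A statement of the form $\Lambda_\WH(G/N)\le\Lambda_\WH(G)$ for $N$ closed normal and merely amenable is \emph{not} established there, and your own diagnosis of the difficulty is accurate: averaging a $C_0$ Herz--Schur multiplier over a noncompact subgroup such as $Z(H_3)\cong\R$ destroys the $C_0$ condition, and there is no known substitute. So the quotient step from the Jacobi group $J=H_3\rtimes\SL(2,\R)$ down to $G_0=\R^2\rtimes\SL(2,\R)$ cannot be justified with the tools at hand. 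Your fallback---adapting the proof of Theorem~\ref{thm:SL2} directly to $J$ or $\tilde J$---amounts to proving a new theorem; indeed the closing remark of Section~\ref{sec:SL2} singles out the closely related question of whether $\R^3\rtimes\SL(2,\R)$ fails the weak Haagerup property as precisely the missing ingredient for an alternative proof along these lines, and leaves it open.

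The paper's actual argument is entirely different and does not go through $G_0$ at all. For $\SL(3,\R)$ and $\Sp(2,\R)$ it invokes the result of Haagerup--de~Laat \cite{MR3047470} that $B_2(G)\cap C_0(K\backslash G/K)$ is weak$^*$-closed in $B_2(G)$; since $1\notin C_0(G)$, no bounded net in this space can weak$^*$-converge to $1$, contradicting Lemma~\ref{lem:SL3}. For $\tilde\Sp(2,\R)$ the same strategy is carried out with a more elaborate averaging (over a compact subgroup $\tilde H\simeq\SU(2)$ together with a one-parameter family $\tilde v_t$) to land in a space $\mc C_0$, and then the weak$^*$-closedness of a suitable subspace $\mc T\supseteq B_2(\tilde G)\cap\mc C_0$ is imported from \cite{delaat2}. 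So the paper's proof is a direct application of the AP machinery of \cite{MR3047470,delaat2}, whereas your proposal tries to bootstrap from Theorem~\ref{thm:SL2}; the latter would be more elementary if it worked, but at present it does not close for the symplectic groups.
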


The case of $\SL(3,\R)$ can also be found in \cite[Theorem~5.1]{MR2838352} (take $p=\infty$). Our proof of Theorem~\ref{thm:SL3} is a fairly simple application of the recent methods and results of de~Laat and the first author \cite{MR3047470}, \cite{delaat2}, where it is proved that a connected simple Lie group $G$ of real rank at least two does not have the Approximation Property (AP), that is, there is no net $(u_\alpha)_{\alpha\in A}$ in $B_2(G)\cap C_c(G)$ which converges to the constant function $1_G$ in the natural weak$^*$-topology on $B_2(G)$. By inspection of their proofs in the case of the three groups mentioned in Theorem~\ref{thm:SL3}, one gets that for those three groups the net $(u_\alpha)_{\alpha\in A}$ cannot even be chosen as functions in $B_2(G)\cap C_0(G)$, which proves Theorem~\ref{thm:SL3}.

By standard structure theory of connected simple Lie groups, it now follows that the conclusion of Theorem~\ref{thm:SL3} holds for all connected simple Lie groups of real rank at least two. Moreover, by \cite{MR748862}, \cite{MR996553}, \cite{MR784292}, \cite{MR1079871} every connected simple Lie group of real rank zero or one is weakly amenable. We thus obtain the following theorem.

\begin{thmA}\label{thm:simple-Lie-groups}
Let $G$ be a connected simple Lie group. Then $G$ has the weak Haagerup property if and only if the real rank of $G$ is at most one.
\end{thmA}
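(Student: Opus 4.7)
My plan is a direct two-direction argument that reduces the nontrivial implication to Theorem~\ref{thm:SL3}.

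For the easy direction ($\Leftarrow$), suppose $G$ has real rank at most one. Then $G$ is weakly amenable by the combined results \cite{MR748862, MR784292, MR996553, MR1079871}, already invoked in the paragraph preceding the theorem (with the rank-zero case being trivial since $G$ is then compact, so the constant function $1_G$ already satisfies all requirements). Since $C_c(G)\subseteq C_0(G)$, weak amenability of $G$ immediately implies the weak Haagerup property for $G$, and in fact $\Lambda_\WH(G)\le\Lambda_\WA(G)$.

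For the hard direction ($\Rightarrow$), I would argue by contraposition. Assume $G$ is a connected simple Lie group of real rank at least two. The goal is to exhibit a closed subgroup $H\leq G$ isomorphic to one of the three groups appearing in Theorem~\ref{thm:SL3}. To produce $H$, I would invoke the structural reduction already worked out by de~Laat and the first author in \cite{MR3047470, delaat2}: inspecting the restricted root system of the Lie algebra of $G$ (which has rank at least two, and hence contains a rank-two subsystem of type $A_2$ or $C_2$), one obtains a closed connected Lie subgroup $H\leq G$ whose Lie algebra is isomorphic to $\mathfrak{sl}(3,\R)$ or $\mathfrak{sp}(2,\R)$; by tracking fundamental groups, $H$ can be taken to be one of $\SL(3,\R)$, $\Sp(2,\R)$, or $\tilde\Sp(2,\R)$. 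Theorem~\ref{thm:SL3} then says that $H$ fails the weak Haagerup property, and the fact that weak Haagerup passes to closed subgroups (established in \cite{K-WH}) transports this failure back to $G$.

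The main obstacle is the structural step producing $H$ with the correct global form — in particular, controlling whether one lands on $\Sp(2,\R)$ or on its universal cover $\tilde\Sp(2,\R)$, since the weak Haagerup property is not in general known to pass to non-compact covering groups, so one cannot afford to be sloppy here. The redeeming point is that Theorem~\ref{thm:SL3} is itself obtained by inspecting the arguments of \cite{MR3047470, delaat2} on precisely the groups $\SL(3,\R)$, $\Sp(2,\R)$, and $\tilde\Sp(2,\R)$; consequently, the structural reduction needed here is the identical one already carried out in those papers, and the proof essentially amounts to quoting that reduction and replacing "Approximation Property" by "weak Haagerup property" throughout.
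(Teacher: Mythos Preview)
Your overall strategy matches the paper's: the easy direction is identical, and for the hard direction both you and the paper reduce to Theorem~\ref{thm:SL3} via a closed subgroup $H\leq G$ with Lie algebra $\mathfrak{sl}(3,\R)$ or $\mathfrak{sp}(2,\R)$, then use that the weak Haagerup property passes to closed subgroups \cite[Theorem~A(1)]{K-WH}.

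There is, however, a genuine gap in your structural step. You assert that ``by tracking fundamental groups, $H$ can be taken to be one of $\SL(3,\R)$, $\Sp(2,\R)$, or $\tilde\Sp(2,\R)$.'' This is not true: the structure theory only yields $H$ \emph{locally} isomorphic to $\SL(3,\R)$ or $\Sp(2,\R)$. For instance, $H$ could be the double cover of $\SL(3,\R)$, or $H$ could be $\tilde\Sp(2,\R)/n\Z$ for some $n\geq 2$ --- none of which appear in Theorem~\ref{thm:SL3}. Your proposed fix, to quote the reduction from \cite{MR3047470,delaat2} verbatim, does not close this gap on its own: that reduction \emph{also} relies on a permanence property (for AP) under quotients by compact normal subgroups, and you have not invoked the corresponding fact for the weak Haagerup property.

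The paper handles this explicitly by a short case analysis using \cite[Theorem~A(2)]{K-WH}, namely $\Lambda_\WH(G)=\Lambda_\WH(G/K)$ for $K$ compact normal. If $H$ is locally isomorphic to $\SL(3,\R)$, its center $Z$ is finite and $H/Z\simeq\SL(3,\R)$, so $H$ fails WH. If $H$ is locally isomorphic to $\Sp(2,\R)$, then either $H$ has infinite center, forcing $H\simeq\tilde\Sp(2,\R)$, or $H$ has finite center $Z$ with $H/Z\simeq\Sp(2,\R)/\{\pm1\}$, and one again transfers the failure from $\Sp(2,\R)$ via two applications of \cite[Theorem~A(2)]{K-WH}. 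You identified the obstacle correctly but need this extra permanence ingredient, not just the closed-subgroup one, to resolve it.
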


For connected simple Lie groups $G$ the constants $\Lambda_\WA(G)$ are known: if the real rank is zero, then $G$ is compact and $\Lambda_\WA(G) = 1$. If the real rank is at least two, then by \cite{UH-preprint}, \cite{MR1418350} the group $G$ is not weakly amenable and hence $\Lambda_\WA(G) = \infty$. Finally, in the real rank one case, one has by \cite{MR748862}, \cite{MR996553}, \cite{MR784292}, \cite{MR1079871} that
\begin{align}\label{eq:WA}
\Lambda_{\WA}(G) = \begin{cases}
1 & \text{ for } G\approx\SO_0(1,n) \\
1 & \text{ for } G\approx\SU(1,n) \\
2n-1 & \text{ for } G\approx\Sp(1,n) \\
21 & \text{ for } G\approx\FF \\
\end{cases}
\end{align}
where $G \approx H$ means that $G$ is locally isomorphic to $H$. We prove the following theorem.
\begin{thmA}\label{thm:rank-one}
For every connected simple Lie group $G$, $\Lambda_\WA(G) = \Lambda_\WH(G)$.
\end{thmA}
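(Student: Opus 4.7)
The approach is to split into cases according to the real rank of $G$. The inequality $\Lambda_\WH(G) \leq \Lambda_\WA(G)$ is immediate from the inclusion $C_c(G) \subseteq C_0(G)$, so the plan is to establish the reverse inequality in each case.

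If $G$ has real rank zero, then $G$ is compact, so $1_G \in B_2(G) \cap C_0(G)$ and both constants equal $1$. If $G$ has real rank at least two, then by Theorem~\ref{thm:simple-Lie-groups} both constants are infinite. The main case is real rank one, where after reducing to the four models $\SO_0(1,n)$, $\SU(1,n)$, $\Sp(1,n)$ and $\FF$ (using that both constants are invariants of local isomorphism, which I would verify separately by lifting/quotienting by discrete central subgroups), the $\SO_0(1,n)$ and $\SU(1,n)$ cases give $\Lambda_\WA(G) = 1$ by \eqref{eq:WA}, and the trivial bound $\Lambda_\WH(G) \geq 1$ finishes them.

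The actual work lies in the property~(T) groups $\Sp(1,n)$ with $n \geq 2$ and $\FF$, where I must show that $\Lambda_\WH(G) \geq 2n-1$, respectively $\Lambda_\WH(G) \geq 21$. The plan is to revisit the Cowling--Haagerup and Cowling lower bound proofs for the weak amenability constants and show that they go through verbatim in the weak Haagerup setting. Starting from a net $(u_\alpha)$ in $B_2(G) \cap C_0(G)$ with $\|u_\alpha\|_{B_2} \leq C$ and $u_\alpha \to 1_G$ uniformly on compacts, averaging over a maximal compact subgroup $K$ yields $K$-biinvariant functions $\tilde u_\alpha$ still in $B_2(G) \cap C_0(G)$, with the same Herz--Schur bound and the same uniform convergence. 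The Cartan decomposition $G = KA^+K$ identifies these with functions $f_\alpha \in C_0([0,\infty))$ satisfying $f_\alpha(0) \to 1$ and $f_\alpha(t) \to 0$ as $t\to\infty$. The original lower-bound proofs bound the Herz--Schur norm of a $K$-biinvariant multiplier from below by a finite matrix (or Hankel-type) expression in finitely many values $f_\alpha(t_k)$ for a sequence $t_k \to \infty$; since only the $C_0$-vanishing is used, not compact support, the same estimates deliver the claimed lower bound on $C$.

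The main obstacle is to verify carefully that the lower-bound machinery in \cite{MR996553}, \cite{MR784292}, \cite{MR1079871}, and Cowling's calculation for $\FF$, really depends only on the $C_0$-decay of the averaged multipliers rather than on compact support. I expect this to be a routine but nontrivial re-inspection of those proofs, and it is the technical heart of the theorem.
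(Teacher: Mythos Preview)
Your case-splitting and reduction to $\Sp(1,n)$ and $\FF$ match the paper exactly. The gap is in your description of the Cowling--Haagerup lower bound and what it takes to extend it to $C_0$.

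The lower bound in \cite{MR996553} is \emph{not} a finite Schur-matrix expression in values $f_\alpha(t_k)$. It is obtained by restricting a $K$-bi-invariant multiplier $u$ to the nilpotent part $N$ of the Iwasawa decomposition and bounding $\|u|_N\|_{A(N)}$ from below by an integral involving the $a$-th derivative $f^{(a)}$ (Proposition~\ref{prop:CH-1}). Two genuine obstacles arise when $u$ is merely in $B_2(G)\cap C_0(G)$ rather than $C_c(G)$. First, one must know that $u|_N$ lies in $A(N)$ at all: restriction only gives $u|_N\in B_2(N)=B(N)$ (by amenability of $N$), and $B(N)\cap C_0(N)=A(N)$ is not automatic. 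The paper handles this by passing through the minimal parabolic $P\supseteq N$ and invoking the nontrivial result $A(P)=B(P)\cap C_0(P)$ from \cite{K-fourier} (Theorem~\ref{thm:fourier-P}); this is singled out in the introduction as the key external ingredient. Second, even once $u|_N\in A(N)$, the Cowling--Haagerup integral involves $f^{(a)}$ and uses compact support to justify integration by parts with no boundary terms. The paper removes the derivative by integrating by parts $a$ times \emph{first} on compactly supported approximants (produced via weak amenability, Lemma~\ref{lem:WA}), obtaining a bound that involves only $f$ against an integrable kernel $g'$, and then passes to the limit by dominated convergence (Proposition~\ref{prop:norm-est}).

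Neither step is a routine re-inspection: the first needs an independent theorem about Fourier algebras of parabolic subgroups, and the second requires reshaping the estimate before one can take limits. Your plan as stated would stall at both points.
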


It is clear that for every locally compact group $G$ one has $1\leq\Lambda_\WH(G)\leq\Lambda_\WA(G)$. Theorem~\ref{thm:rank-one} then amounts to show that $\Lambda_\WH(G) = \Lambda_\WA(G)$ when $G$ is locally isomorphic to $\Sp(1,n)$ or $\FF$. Moreover, since the groups $\Sp(1,n)$ and $\FF$ are simply connected and have finite center, one can actually restrict to the case when $G$ is either $\Sp(1,n)$ or $\FF$. The proof of Theorem~\ref{thm:rank-one} in these two cases relies heavily on a result from \cite{K-fourier}, namely that for $\Sp(1,n)$ and $\FF$ the minimal parabolic subgroup $P = MAN$ of these groups has the property that $A(P) = B(P)\cap C_0(P)$. Here, $A(P)$ and $B(P)$ denote, respectively, the Fourier algebra and the Fourier--Stieltjes algebra of $P$ (see Section~\ref{sec:prelim}).

For all the groups mentioned so far, the weak Haagerup property coincides with weak amenability and with the AP. As an example of a group with the AP which fails the weak Haagerup property we have the following theorem.
\begin{thmA}\label{thm:SL2}
The group $\R^2 \rtimes \SL(2,\R)$ does not have the weak Haagerup property.
\end{thmA}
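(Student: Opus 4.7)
The plan is to argue by contradiction, restricting an approximating net to the abelian normal subgroup $N=\R^2$ and exploiting the linear $\SL(2,\R)$-action on $N$. Suppose $G=\R^2\rtimes\SL(2,\R)$ has the weak Haagerup property, witnessed by a net $(u_\alpha)$ in $B_2(G)\cap C_0(G)$ with $\|u_\alpha\|_{B_2}\le C$ and $u_\alpha\to 1$ uniformly on compacts. Since $N$ is amenable, the restriction $B_2(G)\to B_2(N)=B(N)$ is contractive, so $v_\alpha:=u_\alpha|_N$ lies in $B(N)\cap C_0(N)$ with $\|v_\alpha\|_{B(N)}\le C$ and $v_\alpha\to 1$ uniformly on compacts of $N$. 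By Bochner's theorem, $v_\alpha=\hat\mu_\alpha$ for complex Radon measures $\mu_\alpha$ on $\hat N\cong\R^2$ of total variation $\le C$; the $C_0$-condition forces $\mu_\alpha$ to be a Rajchman measure, in particular atomless by Wiener's theorem.

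Next I would bring in $\SL(2,\R)$-equivariance. Conjugation by $g\in\SL(2,\R)\subset G$ preserves the Herz--Schur norm and acts on $N$ by the linear action of $g$, so $x\mapsto v_\alpha(gx)$ lies in $B(N)$ with norm $\le C$ and corresponds to the pushforward $(g^T)_*\mu_\alpha$. Uniform convergence on compacts of $G$ then gives $\hat{(g^T)_*\mu_\alpha}(\xi)\to 1$ uniformly for $(g,\xi)$ in compact subsets of $\SL(2,\R)\times\R^2$. Thus the full $\SL(2,\R)$-orbit of $\mu_\alpha$ is a uniformly bounded family of Rajchman measures whose Fourier transforms approximate $1$.

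To reach the contradiction I would first average over $K=\SO(2)$, which is harmless for the $B_2$-norm, obtaining radial Rajchman measures $\bar\mu_\alpha$ satisfying the same bounds and convergence. Since $KAK=\SL(2,\R)$, where $A=\{\mr{diag}(a,a^{-1}):a>0\}$, acts transitively on $\R^2\setminus\{0\}$, the only $K$- and $A$-invariant finite Radon measures on $\R^2$ are scalar multiples of $\delta_0$---and $\delta_0$ is not Rajchman. The plan is to extract a weak-$*$ limit of appropriate Ces\`aro averages of $\bar\mu_\alpha$ along $A$, producing in the limit an $\SL(2,\R)$-invariant measure of total mass tending to $1$ that retains an appropriate form of the Rajchman property, and hence a contradiction.

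The main obstacle is precisely this last step: since $\SL(2,\R)$ is non-amenable, one cannot average over it directly, and the averaging must instead be performed along the amenable subgroups $K$ and $A$ with delicate control of the limit. Morally, the argument amounts to establishing a Herz--Schur multiplier strengthening of the classical relative property (T) of the pair $(G,\R^2)$: every bounded net of Herz--Schur multipliers on $G$ converging to $1$ uniformly on compacts must also converge to $1$ uniformly on $\R^2$, which directly contradicts $u_\alpha|_N\in C_0(N)$ since $\R^2$ is non-compact.
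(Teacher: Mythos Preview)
Your approach diverges from the paper's in a fundamental way, and the step you yourself flag as the ``main obstacle'' is a genuine gap that I do not see how to close along the lines you suggest.

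You restrict to the abelian normal subgroup $\R^2$; the paper instead restricts to the three-dimensional Heisenberg group $N$ sitting inside $G\subset\SL(3,\R)$ as the unipotent upper-triangular matrices. Your endgame would require a Herz--Schur strengthening of relative property~(T) for the pair $(G,\R^2)$: that any bounded net in $B_2(G)$ converging to $1$ on compacts must converge to $1$ uniformly on $\R^2$. For \emph{positive definite} functions this is exactly classical relative~(T), and the standard proof works because the associated measures $\mu_\alpha$ are probability measures, so spectral-gap and mass-concentration arguments apply. For complex measures of bounded total variation there is no such positivity, and your proposed Ces\`aro averaging along $A$ does not obviously yield a contradiction: weak-$*$ limits need not preserve the Rajchman property, mass can escape to infinity, and even if the limit were $c\delta_0$ with $c\neq 0$, weak-$*$ convergence of measures gives only locally uniform convergence of Fourier transforms, not the global uniform convergence you need. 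You have, in effect, reduced the theorem to a statement at least as hard as the theorem itself.

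The paper avoids this entirely. It uses two ingredients absent from your outline: first, a result from \cite{K-fourier} that the solvable subgroup $P\supset N$ satisfies $A(P)=B(P)\cap C_0(P)$, which upgrades the restriction $u_\alpha|_N$ from merely lying in $B(N)\cap C_0(N)$ to lying in the Fourier algebra $A(N)$; second, an explicit integral inequality (Proposition~\ref{prop:HE}, from \cite{MR1245415}, \cite{UH-preprint}) bounding $\int_\R |u(x,0,0)|^2(1+x^2/4)^{-1/2}\,dx$ by $12\pi\|u\|_{A(N)}^2$ for $K$-bi-invariant $u$. The contradiction then comes from the divergence of $\int_\R (1+x^2/4)^{-1/2}\,dx$. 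The AP for $G$ (not just amenability of a subgroup) is used to pass from compactly supported functions, where the integral inequality is first proved, to general elements of $A(N)\cap C^\infty(N)$.
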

Combining Theorem~\ref{thm:SL2} with \cite[Theorem~A]{K-WH} we observe that the discrete group $\Z^2\rtimes\SL(2,\Z)$, which is a lattice in $\R^2\rtimes\SL(2,\R)$, also does not have the weak Haagerup property.

Theorem~\ref{thm:SL2} generalizes a result from \cite{UH-preprint} where it is shown that $\R^2\rtimes\SL(2,\R)$ is not weakly amenable. Crucial to our proof of Theorem~\ref{thm:SL2} are some of the techniques developed in \cite{UH-preprint}. These techniques are further developed here using a result from \cite{MR1220905}, namely that $\R^2\rtimes\SL(2,\R)$ satisfies the AP. Also, \cite[Theorem~2]{K-fourier} is essential in the proof of Theorem~\ref{thm:SL2}.

Both groups $\R^2$ and $\SL(2,\R)$ enjoy the Haagerup property and hence also the weak Haagerup property. Theorem~\ref{thm:SL2} thus shows that extensions of groups with the (weak) Haagerup property need not have the weak Haagerup property.


\section{Preliminaries}\label{sec:prelim}
Let $G$ be a locally compact group equipped with a left Haar measure. We denote the left regular representation of $G$ on $L^2(G)$ by $\lambda$. As usual, $C(G)$ denotes the (complex) continuous functions on $G$. When $G$ is a Lie group, $C^\infty(G)$ is the space of smooth functions on $G$.


We first describe the Fourier--Stieltjes algebra and the Fourier algebra of $G$. These were originally introduced in the seminal paper \cite{MR0228628} to which we refer for further details about these algebras. Afterwards we describe the Herz--Schur multiplier algebra.

The \emph{Fourier--Stieltjes algebra} $B(G)$ can be defined as set of matrix coefficients of strongly continuous unitary representations of $G$, that is, $u\in B(G)$ if and only if there are a strongly continuous unitary representation $\pi\colon G\to U(\mc H)$ of $G$ on a Hilbert space $\mc H$ and vectors $x,y\in \mc H$ such that
\begin{align}\label{eq:matrix-coefficient}
u(g) = \la \pi(g)x,y\ra \quad\text{for all } g\in G.
\end{align}
The norm $\|u\|_B$ of $u\in B(G)$ is defined as the infimum (actually a minimum) of all numbers $\|x\|\|y\|$, where $x,y$ are vectors in some representation $(\pi,\mc H)$ such that \eqref{eq:matrix-coefficient} holds. With this norm $B(G)$ is a unital Banach algebra. The Fourier--Stieltjes algebra coincides with the linear span of the continuous positive definite functions on $G$. For any $u\in B(G)$ the inequality $\|u\|_\infty\leq \|u\|_B$ holds, where $\|\ \|_\infty$ denotes the uniform norm.

The compactly supported functions in $B(G)$ form an ideal in $B(G)$, and the closure of this ideal is the \emph{Fourier algebra} $A(G)$, which is then also an ideal. The Fourier algebra coincides with the set of matrix coefficients of the left regular representation $\lambda$, that is, $u\in A(G)$ if and only if there are vectors $x,y\in L^2(G)$ such that
\begin{align}\label{eq:matrix-coefficient-regular}
u(g) = \la \lambda(g)x,y\ra \quad\text{for all } g\in G.
\end{align}
The norm of $u\in A(G)$ is the infimum of all numbers $\|x\|\|y\|$, where $x,y\in L^2(G)$ satisfy \eqref{eq:matrix-coefficient-regular}. We often write $\|u\|_A$ for the norm $\|u\|_B$ when $u\in A(G)$.

The dual space of $A(G)$ can be identified with the group von Neumann algebra $L(G)$ of $G$ via the duality
$$
\la a,u\ra = \la ax,y \ra = \int_G (ax)(g) \overline{y(g)}\ dg
$$
where $a\in L(G)$ and $u\in A(G)$ is of the form \eqref{eq:matrix-coefficient-regular}.

When $G$ is a Lie group, it is known that $C^\infty_c(G)\subseteq A(G)$ (see \cite[Proposition~3.26]{MR0228628}). 

Since the uniform norm is bounded by the Fourier--Stieltjes norm, it follows that $A(G) \subseteq B(G) \cap C_0(G)$. For many groups this inclusion is strict (see e.g \cite{K-fourier}), but in some cases it is not. We will need the following result when proving Theorem~\ref{thm:rank-one}.

\begin{thm}[{\cite[Theorem~3]{K-fourier}}]\label{thm:fourier-P}
Let $G$ be one of the groups $\SO(1,n)$, $\SU(1,n)$, $\Sp(1,n)$ or $\FF$, and let $G = KAN$ be the Iwasawa decomposition. The group $N$ is contained in a closed amenable group $P$ satisfying $A(P) = B(P)\cap C_0(P)$.
\end{thm}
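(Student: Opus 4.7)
The natural candidate for $P$ is the minimal parabolic subgroup $P = MAN$ coming from the Iwasawa decomposition $G = KAN$, where $M$ is the centralizer of $A$ in $K$. This group is closed in $G$, contains $N$, and is amenable since $M$ is compact and $AN$ is solvable, so the only substantive task is to establish the equality $A(P) = B(P)\cap C_0(P)$. The inclusion $A(P) \subseteq B(P)\cap C_0(P)$ is automatic, so the content is the reverse inclusion.

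The structural feature I would exploit is that in all four cases $A \cong \R^+$ acts on $N$ by nontrivial dilations (with $N$ abelian for $\SO(1,n)$ and of Heisenberg type otherwise), so the dual action of $A$ on $\hat N$ has every orbit other than the fixed point $\{0\}$ noncompact, escaping to infinity. Via the Mackey machine, the irreducible unitary representations of $P$ then fall into two families: (i) those that factor through the quotient $P/N \cong MA$, and (ii) those induced from characters (or Stone--von~Neumann type representations) sitting on nontrivial $A$-orbits in $\hat N$, extended to their stabilizer in $P$. Family (ii) is weakly contained in the left regular representation $\lambda_P$, since the inducing subgroup is amenable and the orbits escape to infinity, so its matrix coefficients lie in $A(P)$.

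Given $u \in B(P) \cap C_0(P)$, I would perform a central (direct integral) decomposition of an underlying cyclic representation whose matrix coefficient is $u$. This splits $u = u_1 + u_2$, where $u_1$ is built from representations of type (i) and $u_2$ from those of type (ii). Because $u_1$ is pulled back from $MA$, the hypothesis $u \in C_0(P)$ combined with normality of $N$ in $P$ forces $u_1$ to vanish simultaneously along every $N$-coset; since $u_1$ is constant on $N$-cosets, this forces $u_1 \equiv 0$. The remaining piece $u_2$ is then automatically in $A(P)$ by family (ii).

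The main obstacle will be making the direct integral and weak-containment argument rigorous enough to land $u_2$ in $A(P)$ itself rather than in a weak closure, and to justify that the $C_0$ condition really kills the $u_1$-part. Concretely, one needs a Plancherel-type disintegration for $P$ adapted to the $MN$-subgroup structure, and one needs to know that matrix coefficients of the induced representations in family (ii) are themselves $L^2$ along $A$, which is exactly where the nontrivial dilation of $A$ on $N$ and the specific Heisenberg/abelian structure of $N$ enter decisively.
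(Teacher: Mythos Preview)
The paper does not prove this theorem; it is quoted from \cite{K-fourier} and used as a black box in Sections~\ref{sec:rank-1} and~\ref{sec:SL2}. The introduction does confirm that the intended group is the minimal parabolic $P = MAN$, so your choice of $P$ is correct, and there is nothing in the present paper to compare your argument against.

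That said, your strategy is in the spirit of known $A = B\cap C_0$ results (Khalil's theorem for the $ax+b$ group is the prototype), but there is a genuine error, not merely a gap, in your family-(ii) step. Since $P$ is amenable, \emph{every} unitary representation of $P$ is weakly contained in $\lambda_P$; hence ``weakly contained in $\lambda_P$'' is vacuous here and certainly cannot force matrix coefficients into $A(P)$ (otherwise $B(P)\subseteq A(P)$, which is false). What one actually needs is that each family-(ii) representation is a \emph{subrepresentation} of a multiple of $\lambda_P$, so that its coefficients are honest matrix coefficients $\la \lambda_P(\cdot)\xi,\eta\ra$ with $\xi,\eta\in L^2(P)$. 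For these particular $P$ this does hold, essentially because the dilation action of $A$ on $\hat N$ is transitive away from the fixed point and the Plancherel decomposition of $\lambda_P$ already exhausts family (ii); but establishing this is the substantive work and uses the specific Heisenberg-type (or abelian, in the $\SO(1,n)$ case) structure of $N$. Your closing paragraph understates the issue: it is not a matter of rigor but of using the wrong containment notion.

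There is also a circularity in your $u_1 = 0$ step: to conclude $u_1 \in C_0(P)$ from $u \in C_0(P)$ you would need $u_2 \in C_0(P)$, which you have not yet established at that point. The usual route is first to show directly that every family-(ii) coefficient lies in $C_0(P)$ (a Howe--Moore type decay along $A$), then deduce $u_1 = u - u_2 \in C_0(P)$, and finally use constancy of $u_1$ along the noncompact $N$-cosets to get $u_1 = 0$.
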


We will need the following lemma in Section~\ref{sec:SL2}. For a demonstration, see the proof of Proposition~1.12 in \cite{MR784292}.

\begin{lem}[\cite{MR784292}]\label{lem:A-extension}
Let $G$ be a locally compact group with a closed subgroup $H\subseteq G$. If $ u\in A(G)$, then $ u|_H \in A(H)$. Moreover, $\| u|_H \|_{A(H)} \leq \| u\|_{A(G)}$. Conversely, if $ u\in A(H)$, then there is $\tilde u\in A(G)$ such that $ u = \tilde u|_H$ and $\| u\|_{A(H)} = \inf\{ \|\tilde u\|_{A(G)} \mid \tilde u\in A(G),\ \tilde u|_H =  u\}$.
\end{lem}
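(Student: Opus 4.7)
The plan is to work directly with the characterization of $A(G)$ as matrix coefficients of the left regular representation $\lambda^G$ on $L^2(G)$, and to exploit the decomposition of $L^2(G)$ along right cosets of $H$. Fix a quasi-invariant measure $\mu$ on $H\backslash G$ together with a positive Borel rho-function relating $\mu$ to the Haar measure on $G$ through Weil's formula. This produces a unitary isomorphism
$$
U\colon L^2(G)\longrightarrow L^2(H)\otimes L^2(H\backslash G,\mu).
$$
Because left multiplication by any $h\in H$ preserves every right coset $Hg$ and acts on it as left translation of $H$, the operator $U$ intertwines $\lambda^G(h)$ with $\lambda^H(h)\otimes I$ for all $h\in H$. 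Both conclusions of the lemma will fall out of this single identification.

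For the restriction direction, given $u\in A(G)$ write $u(g)=\la\lambda^G(g)\xi,\eta\ra$ with $\xi,\eta\in L^2(G)$ and $\|\xi\|\|\eta\|$ arbitrarily close to $\|u\|_{A(G)}$. Set $\hat\xi:=U\xi$ and $\hat\eta:=U\eta$. For $h\in H$ the intertwining property yields
$$
u|_H(h)=\la(\lambda^H(h)\otimes I)\hat\xi,\hat\eta\ra,
$$
exhibiting $u|_H$ as a matrix coefficient of the amplification $\lambda^H\otimes I$. Expanding $\hat\xi,\hat\eta$ in an orthonormal basis of $L^2(H\backslash G,\mu)$ writes this as an absolutely convergent sum of matrix coefficients of $\lambda^H$ itself, and Cauchy--Schwarz gives the norm bound $\|u|_H\|_{A(H)}\leq\|\hat\xi\|\|\hat\eta\|=\|\xi\|\|\eta\|$. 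Taking the infimum over choices of $\xi,\eta$ yields $\|u|_H\|_{A(H)}\leq\|u\|_{A(G)}$.

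For the extension direction, fix once and for all a unit vector $\phi\in L^2(H\backslash G,\mu)$ (for instance the normalized characteristic function of a Borel set of finite positive measure). Given $u\in A(H)$ with $u(h)=\la\lambda^H(h)\xi,\eta\ra$, set $\tilde\xi:=U^{-1}(\xi\otimes\phi)$, $\tilde\eta:=U^{-1}(\eta\otimes\phi)$, and define $\tilde u(g):=\la\lambda^G(g)\tilde\xi,\tilde\eta\ra$. Then $\tilde u\in A(G)$ with $\|\tilde u\|_{A(G)}\leq\|\xi\|\|\eta\|$, and for $h\in H$ the intertwining gives $\tilde u(h)=\la\lambda^H(h)\xi,\eta\ra\la\phi,\phi\ra=u(h)$, so $\tilde u|_H=u$. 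Combining with the restriction inequality from the previous paragraph yields the asserted norm identity. The one nontrivial ingredient is the construction of $U$ with its correct intertwining property; this is standard Mackey theory, and in the unimodular setting relevant to the applications in the present paper it reduces, after choosing a Borel cross-section of the quotient map $G\to H\backslash G$, to plain separation of variables $L^2(G)\cong L^2(H)\otimes L^2(H\backslash G)$.
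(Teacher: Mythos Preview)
Your argument is correct. The paper itself does not prove this lemma; it simply refers the reader to the proof of Proposition~1.12 in De~Canni\`ere--Haagerup \cite{MR784292}. Your approach is in fact the standard one and is essentially what appears in that reference: one shows that $\lambda^G|_H$ is unitarily equivalent to $\lambda^H\otimes I_{\mc K}$ by disintegrating $L^2(G)$ along right $H$-cosets, and both the restriction and extension statements follow from this single intertwining. The measure-theoretic point you defer to ``standard Mackey theory'' is exactly the observation that left $H$-invariance of Haar measure on $G$ forces the disintegrated fiber measures to be (multiples of) left Haar measure on $H$, so that the direct integral collapses to a tensor product; this holds without any unimodularity assumption, so your parenthetical restriction to the unimodular case in the final sentence is unnecessary.

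One small point worth making explicit for the restriction direction: the fact that a matrix coefficient of $\lambda^H\otimes I_{\mc K}$ lies in $A(H)$ with the claimed norm bound uses that $A(H)$ is a Banach space (so the absolutely convergent series $\sum_i\la\lambda^H(\cdot)\xi_i,\eta_i\ra$ converges \emph{in $A(H)$}); you use this implicitly when invoking Cauchy--Schwarz. Equivalently, one can observe that $\lambda^H\otimes I_{\mc K}$ extends to a normal $*$-representation of the group von Neumann algebra $L(H)$, so its coefficients are automatically normal functionals on $L(H)$, i.e., elements of $A(H)$.
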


We now recall the definition of the \emph{Herz--Schur multiplier algebra} $B_2(G)$. A function $k\colon G\times G\to\C$ is a Schur multiplier on $G$ if for every bounded operator $A = [a_{xy}]_{x,y\in G} \in B(\ell^2(G))$ the matrix $[k(x,y)a_{xy}]_{x,y\in G}$ represents a bounded operator on $\ell^2(G)$, denoted $m_k(A)$. If this is the case, then by the closed graph theorem $m_k$ defines a \emph{bounded} operator on $B(\ell^2(G))$, and the Schur norm $\|k\|_S$ is defined as the operator norm of $m_k$.

A continuous function $u\colon G\to\C$ is a Herz--Schur multiplier, if $k(x,y) = u(y^{-1}x)$ is a Schur multiplier on $G$, and the Herz--Schur norm $\|u\|_{B_2}$ is defined as $\|k\|_S$. We let $B_2(G)$ denote the space of Herz--Schur multipliers, which is a Banach space, in fact a unital Banach algebra, with the Herz--Schur norm $\|\ \|_{B_2}$. The Herz--Schur norm dominates the uniform norm.

It is known that $B(G) \subseteq B_2(G)$, and $\|u\|_{B_2} \leq \|u\|_B$ for every $u\in B(G)$. In \cite[Theoreme~1(ii)]{MR0425511}, it is shown that $B_2(G)$ multiplies the Fourier algebra $A(G)$ into itself and $\|uv\|_A \leq \|u\|_{B_2}\|v\|_A$ for every $u\in B_2(G)$, $v\in A(G)$. In this way, we can view $B_2(G)$ as bounded operators on $A(G)$, and $B_2(G)$ inherits a point-norm (or strong operator) topology and a point-weak (or weak operator) topology.

It is known that the space of Herz--Schur multipliers coincides isometrically with the completely bounded Fourier multipliers, usually denoted $M_0A(G)$ (see \cite{MR753889} or \cite{MR1180643}). It is well known that if $G$ is amenable then $B(G) = B_2(G)$ isometrically. The converse is known to hold, when $G$ is discrete (see \cite{MR806070}).

Given $f\in L^1(G)$ and $u\in B_2(G)$ define
\begin{align}\label{eq:duality-Q}
\la f,u\ra = \int_G f(x)u(x)\ dx
\end{align}
and
$$
\|f\|_Q = \sup \{ |\la f,u\ra| \mid u\in B_2(G),\ \|u\|_{B_2} \leq 1 \}.
$$
Then $\|\ \|_Q$ is a norm on $L^1(G)$, and the completion of $L^1(G)$ with respect to this norm is a Banach space $Q(G)$ whose dual space is identified with $B_2(G)$ via \eqref{eq:duality-Q} (see \cite[Proposition~1.10(b)]{MR784292}). In this way $B_2(G)$ is equipped with a weak$^*$-topology coming from its predual $Q(G)$. The weak$^*$-topology is also denoted $\sigma(B_2,Q)$.

We recall that $G$ has the Approximation Property (AP) if there is a net $(u_\alpha)_{\alpha\in A}$ in $B_2(G)\cap C_c(G)$ which converges to the constant function $1_G$ in the $\sigma(B_2,Q)$-topology. As with weak amenability, the definition of the AP just given can be seen to be equivalent to the original definition by use of the convolution trick (see \cite[Appendix~B]{K-WH}). For more on the $\sigma(B_2,Q)$-topology and the AP we refer to the original paper \cite{MR1220905}.

The following lemma is a variant of \cite[Proposition~1.3 (a)]{MR1220905}. The statement of \cite[Proposition~1.3 (a)]{MR1220905} involves an infinite dimensional Hilbert space $\ms H$, but going through the proof of \cite[Proposition~1.3 (a)]{MR1220905} one can check that the statement remains true, if $\ms H$ is just the one-dimensional space $\C$. Hence, we have the following lemma.
\begin{lem}[\cite{MR1220905}]
Let $G$ be a locally compact group. Suppose $a\in L(G)$, $v\in A(G)$ and that $f\in A(G)$ is a compactly supported, positive function with integral 1. Then the functional $\omega_{a,v,f} : B_2(G)\to\C$ defined as
$$
\omega_{a,v,f}(u) = \la a, (f*u)v\ra, \quad u\in B_2(G)
$$
is bounded, that is, $\omega_{a,v,f}\in Q(G)$.
\end{lem}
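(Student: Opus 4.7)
The goal is to exhibit a constant $C$, depending on $a$, $v$, $f$, but not on $u$, such that
$|\omega_{a,v,f}(u)|\le C\|u\|_{B_2}$. Once this is done, $\omega_{a,v,f}$ extends by density/duality to an element of the predual $Q(G)$.

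The plan is to bound $(f*u)v$ in the Fourier algebra norm. The first step is to show that convolution by $f$ maps $B_2(G)$ into itself with norm at most $\|f\|_1$. Since $f$ is compactly supported and continuous (hence bounded and integrable), $f\in L^1(G)$. For $u\in B_2(G)$ and $y\in G$, the left translate $L_y u$ lies in $B_2(G)$ with $\|L_y u\|_{B_2}=\|u\|_{B_2}$, because the Schur norm is invariant under simultaneous permutation of rows and columns of the matrix $[u(y^{-1}x)]$ by left translation. Writing
\begin{equation*}
(f*u)(x)=\int_G f(y)\,u(y^{-1}x)\,dy = \int_G f(y)\,(L_y u)(x)\,dy
\end{equation*}
and estimating the Herz--Schur norm of this vector-valued Bochner-type integral gives
\begin{equation*}
\|f*u\|_{B_2}\le \int_G |f(y)|\,\|L_y u\|_{B_2}\,dy = \|f\|_1\,\|u\|_{B_2}.
\end{equation*}

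The second step is to invoke the Bo\.zejko--Fendler factorization recalled in the preliminaries: $B_2(G)\cdot A(G)\subseteq A(G)$ with $\|wv\|_A\le \|w\|_{B_2}\|v\|_A$ for $w\in B_2(G)$ and $v\in A(G)$. Applying this with $w=f*u$ yields $(f*u)v\in A(G)$ and
\begin{equation*}
\|(f*u)v\|_A \le \|f*u\|_{B_2}\,\|v\|_A \le \|f\|_1\,\|v\|_A\,\|u\|_{B_2}.
\end{equation*}

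The third step is to use the duality $A(G)^*=L(G)$ recalled in the preliminaries, which makes the pairing $\langle a,(f*u)v\rangle$ meaningful and gives $|\langle a,(f*u)v\rangle|\le \|a\|_{L(G)}\,\|(f*u)v\|_A$. Combining this with the previous estimate yields
\begin{equation*}
|\omega_{a,v,f}(u)| \le \|a\|_{L(G)}\,\|v\|_A\,\|f\|_1\,\|u\|_{B_2},
\end{equation*}
which is the required bound, so $\omega_{a,v,f}\in Q(G)$. The main technical point is Step~1, the translation-invariance argument showing that convolution by an $L^1$ function is contractive (up to the factor $\|f\|_1$) on $B_2(G)$; the rest is a direct application of results quoted earlier in the preliminaries.
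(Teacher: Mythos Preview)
Your three steps correctly establish the bound $|\omega_{a,v,f}(u)| \le \|a\|_{L(G)}\|v\|_A\|f\|_1\|u\|_{B_2}$, so $\omega_{a,v,f}$ is a bounded linear functional on $B_2(G)$. But this is not the full content of the lemma. Recall from the preliminaries that $Q(G)$ is the \emph{predual} of $B_2(G)$ --- the completion of $L^1(G)$ in the norm $\|\cdot\|_Q$ --- and in general $Q(G)$ is a proper subspace of $B_2(G)^*$. Your sentence ``extends by density/duality to an element of the predual $Q(G)$'' is where the argument breaks: the functional is already globally defined on $B_2(G)$, so there is nothing to extend, and mere boundedness provides no mechanism for landing in the predual rather than the full dual.

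This is not a technicality. The lemma is invoked in the proof of Lemma~\ref{lem:AP} precisely to deduce $\omega_{a,v,f}(u_\alpha)\to\omega_{a,v,f}(1)$ from $u_\alpha\to 1$ in the $\sigma(B_2,Q)$-topology, and that implication needs $\omega_{a,v,f}\in Q(G)$, not just $\omega_{a,v,f}\in B_2(G)^*$. The paper gives no self-contained proof but refers to \cite[Proposition~1.3(a)]{MR1220905}; there the argument does more than bound the norm: using the compact support of $f$ and the concrete form of the $A(G)$--$L(G)$ pairing, it exhibits $\omega_{a,v,f}$ as a norm limit of elements of $L^1(G)\subseteq Q(G)$. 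Your norm estimate is a useful ingredient in such an argument (it controls the limiting process), but the identification with, or approximation by, $L^1$ functions is the missing step.
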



It is known that weakly amenable groups have the AP  \cite[Theorem~1.12]{MR1220905}, and extensions of groups with the AP have the AP \cite[Theorem~1.15]{MR1220905}. In particular, the group $\R^2\rtimes\SL(2,\R)$ has the AP.

Given a compact subgroup $K$ of $G$ we say that a continuous function $f\colon G\to\C$ is $K$-bi-invariant, if $f(kx) = f(xk) = f(x)$ for every $k\in K$ and $x\in G$. The space of continuous $K$-bi-invariant functions on $G$ is denoted $C(K\backslash G/K)$.

The following two lemmas concerning weak amenability and the AP are standard averaging arguments. For the convenience of the reader, we include a proof of the second lemma. A proof of the first can be manufactured in basically the same way. We note that the special cases where $K$ is the trivial subgroup follow from \cite[Proposition~1.1]{MR996553} and \cite[Theorem~1.11]{MR1220905}, respectively.

\begin{lem}\label{lem:WA}
Let $G$ be a locally compact group with compact subgroup $K$. If $G$ is weakly amenable, say $\Lambda_\WA(G) \leq C$, then there is a net $(v_\beta)$ in $A(G)\cap C_c(K\backslash G/K)$ such that
$$
\| v_\beta v -  v\|_{A(G)} \to 0 \quad\text{for every } v\in A(G)
$$
and $\sup_\beta \| v_\beta\|_{B_2} \leq C$. Moreover, if $G$ is a Lie group, we may arrange that each $v_\beta$ is smooth.
\end{lem}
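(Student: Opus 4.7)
The plan is to start with an approximating net furnished by weak amenability, then symmetrize it by averaging over $K$ on both sides; the main point is to show that this symmetrization preserves the property of being a bounded multiplier approximate identity for $A(G)$.

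By the equivalence of the two definitions of weak amenability (in particular \cite[Proposition~1.1]{MR996553}), we can fix a net $(u_\alpha)$ in $A(G)\cap C_c(G)$ with $\sup_\alpha\|u_\alpha\|_{B_2}\leq C$ and $\|u_\alpha v-v\|_{A}\to 0$ for every $v\in A(G)$. When $G$ is a Lie group, density of $C_c^\infty(G)$ in $A(G)$ together with the fact that convolution with a smooth, compactly supported, non-negative $L^1$-function of integral $1$ is a convex combination of left translates (hence contractive on $B_2(G)$) allows one to further arrange each $u_\alpha$ to be smooth.

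Let $m_K$ denote normalized Haar measure on $K$ and define the averaging operator
\[
(Tf)(x)=\int_K\int_K f(k_1xk_2)\,dm_K(k_1)\,dm_K(k_2).
\]
Since left and right translations by elements of $K$ act isometrically on both $A(G)$ and $B_2(G)$, the operator $T$ is contractive on each; it also preserves compact support, preserves smoothness, and has image in $C(K\backslash G/K)$. Setting $v_\beta = Tu_\alpha$ we obtain $v_\beta\in A(G)\cap C_c(K\backslash G/K)$ (smooth in the Lie case) with $\|v_\beta\|_{B_2}\leq C$.

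It remains to verify $\|v_\beta v-v\|_A\to 0$ for every $v\in A(G)$. Writing $\tau_{k_1,k_2}f(x)=f(k_1xk_2)$, a direct computation gives the pointwise identity $(\tau_{k_1,k_2}u_\alpha)\cdot v=\tau_{k_1,k_2}\bigl(u_\alpha\cdot \tau_{k_1^{-1},k_2^{-1}}v\bigr)$, so integrating against $m_K\otimes m_K$ yields
\[
v_\beta v-v=\int_K\int_K \tau_{k_1,k_2}\bigl[(u_\alpha-1)\cdot \tau_{k_1^{-1},k_2^{-1}}v\bigr]\,dm_K(k_1)\,dm_K(k_2).
\]
Since each $\tau_{k_1,k_2}$ is an $A(G)$-isometry, the triangle inequality gives
\[
\|v_\beta v-v\|_A\leq \int_K\int_K \|u_\alpha w_{k_1,k_2}-w_{k_1,k_2}\|_A\,dm_K(k_1)\,dm_K(k_2),
\]
where $w_{k_1,k_2}=\tau_{k_1^{-1},k_2^{-1}}v$. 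The set $\{w_{k_1,k_2}:(k_1,k_2)\in K\times K\}$ is a continuous image of a compact set, hence compact in $A(G)$. The multiplication operators $w\mapsto u_\alpha w-w$ on $A(G)$ are uniformly bounded (norm $\leq C+1$) and tend pointwise to $0$, so by equicontinuity they tend to $0$ uniformly on every compact subset of $A(G)$, in particular on $\{w_{k_1,k_2}\}$. The integrand vanishes uniformly in $(k_1,k_2)$, and the integral goes to $0$. The main obstacles are the translation-covariance identity, which reduces matters to a compact family of translates of $v$, and the equicontinuity step; both are short but essential.
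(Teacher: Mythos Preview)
Your proof is correct. The translation-covariance identity $(\tau_{k_1,k_2}u_\alpha)\cdot v=\tau_{k_1,k_2}(u_\alpha\cdot \tau_{k_1^{-1},k_2^{-1}}v)$ together with norm-compactness of the $K\times K$-orbit of $v$ in $A(G)$ and the equicontinuity of the uniformly bounded multiplication operators $w\mapsto u_\alpha w-w$ is a clean, direct way to show that the $K$-averaged net remains a point-norm approximate identity. One small remark on the smoothness step: the mention of density of $C_c^\infty(G)$ in $A(G)$ is a red herring (approximation in $A$-norm does not preserve the $B_2$-bound). What you actually need is just to replace $u_\alpha$ by $f*u_\alpha$ for a fixed smooth probability density $f$; the $B_2$-bound is preserved as you say, and the approximate identity property follows either by re-invoking \cite[Proposition~1.1]{MR996553} (since $f*u_\alpha\to 1$ uniformly on compacts) or by the very same orbit-compactness argument applied to left translates over $\supp f$.

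The paper does not spell out a proof of this lemma; it only says the proof can be manufactured like that of the companion Lemma~\ref{lem:AP}. That argument takes a different route: after averaging over $K$ and convolving with $f$, it establishes only point-\emph{weak} convergence $w_\alpha v\to v$ (via the functionals $\omega_{a,v,f}$ from \cite{MR1220905}) and then passes to convex combinations to upgrade to point-norm convergence. Your approach bypasses the point-weak step and the Mazur-type convexity trick entirely by exploiting the boundedness of the net directly through equicontinuity. This is more elementary and yields the conclusion for the averaged net itself rather than for unspecified convex combinations; the trade-off is that it genuinely uses the uniform $B_2$-bound, which is unavailable in the AP setting and explains why the paper's convex-combination route is the one that works uniformly for both lemmas.
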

%
%

\begin{lem}\label{lem:AP}
Let $G$ be a locally compact group with compact subgroup $K$. If $G$ has the AP, then there is a net $(v_\beta)$ in $A(G)\cap C_c(K\backslash G/K)$ such that
$$
\| v_\beta v -  v\|_{A(G)} \to 0 \quad\text{for every } v\in A(G).
$$
Moreover, if $G$ is a Lie group, we may arrange that each $v_\beta$ is smooth.
\end{lem}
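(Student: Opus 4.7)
My approach is to start from the defining net for the AP, average over $K$ on both sides to gain $K$-bi-invariance, convolve with a positive bi-invariant function of integral one to move from $B_2(G)$ into $A(G)$, and finally pass to convex combinations to upgrade weak to norm convergence. To begin, let $(u_\alpha) \subseteq B_2(G) \cap C_c(G)$ be a net with $u_\alpha \to 1_G$ in $\sigma(B_2, Q)$, let $dk$ denote normalized Haar measure on $K$, and set
$$
\tilde u_\alpha(x) = \int_K \int_K u_\alpha(k_1 x k_2) \, dk_1 \, dk_2.
$$
Compactness of $K$ and translation invariance of the Herz--Schur norm give $\tilde u_\alpha \in B_2(G) \cap C_c(K \backslash G / K)$. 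The averaging $u \mapsto \tilde u$ is $\sigma(B_2, Q)$-continuous, its predual being the corresponding averaging on $L^1(G)$, which extends to a contraction of $Q(G)$ since translations are $L^1$-isometries; as $1_G$ is fixed by the averaging, this yields $\tilde u_\alpha \to 1_G$ in $\sigma(B_2, Q)$.

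Next, pick $f \in A(G) \cap C_c(K \backslash G / K)$ with $f \geq 0$ and $\int_G f = 1$---obtained by taking a nonnegative element of $C_c(G) * C_c(G) \subseteq A(G)$, averaging over $K \times K$, and normalizing; in the Lie group case, starting from a function in $C_c^\infty(G) \subseteq A(G)$ makes $f$ smooth. Put $w_\alpha := f * \tilde u_\alpha$. Then $w_\alpha \in C_c(G) * C_c(G) \subseteq A(G)$ is $K$-bi-invariant (left invariance comes from $f$, right invariance from $\tilde u_\alpha$) and smooth whenever $f$ is. By the lemma stated just before Lemma~\ref{lem:WA}, for each $a \in L(G)$ and $v \in A(G)$ one has $\omega_{a,v,f} \in Q(G)$, so
$$
\la a, w_\alpha v \ra = \omega_{a,v,f}(\tilde u_\alpha) \longrightarrow \omega_{a,v,f}(1_G) = \la a, (f * 1_G) v \ra = \la a, v \ra,
$$
using $f * 1_G = 1_G$. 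Since $L(G) = A(G)^*$, this says $w_\alpha v \to v$ weakly in $A(G)$ for every $v$.

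To conclude, I upgrade weak to norm convergence using Mazur's theorem: given any finite $F = \{v_1, \dots, v_n\} \subseteq A(G)$ and $\epsilon > 0$, the vector $(v_1, \dots, v_n)$ lies in the weak---hence in the norm---closure of the convex hull of $\{(w_\alpha v_1, \dots, w_\alpha v_n) : \alpha\} \subseteq A(G)^n$, so one may choose a convex combination $v_{(F, \epsilon)} := \sum_i \lambda_i w_{\alpha_i}$ satisfying $\|v_{(F, \epsilon)} v_j - v_j\|_{A(G)} < \epsilon$ for every $j$. Such a $v_{(F, \epsilon)}$ stays in $A(G) \cap C_c(K \backslash G / K)$ (and is smooth if $f$ is), and indexing the pairs $(F, \epsilon)$ by inclusion of $F$ and decrease of $\epsilon$ yields the required net. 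I expect the main technical point to be the $\sigma(B_2, Q)$-continuity of the $K$-averaging, since weak$^*$-convergent nets need not be norm bounded; this is why I argue through the predual.
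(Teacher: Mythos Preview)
Your proof is correct and follows essentially the same route as the paper's: average the AP net over $K$, convolve with a positive $K$-bi-invariant $f\in A(G)\cap C_c(G)$ of integral one, use the functional $\omega_{a,v,f}\in Q(G)$ to obtain point-weak convergence $w_\alpha v\to v$, and then pass to convex combinations to upgrade to point-norm convergence. The only differences are cosmetic---you start from $B_2(G)\cap C_c(G)$ rather than $A(G)\cap C_c(G)$, you justify the weak$^*$-continuity of the $K$-averaging explicitly through the predual where the paper cites an appendix, and you spell out the Mazur argument where the paper invokes \cite[Corollary~VI.1.5]{MR0117523}.
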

\begin{proof}
We suppose $G$ has the AP. Then there is a net $(u_\alpha)$ in $A(G) \cap C_c(G)$ such that $u_\alpha\to 1$ in the $\sigma(B_2,Q)$-topology (see \cite[Remark~1.2]{MR1220905}). Choose a positive function $f\in A(G)$ with compact support and integral 1. By averaging from left and right over $K$ (see Appendix B in \cite{K-WH}), we may further assume that $f$ and each $u_\alpha$ is $K$-bi-invariant. Let $w_\alpha = f * u_\alpha$. Then $w_\alpha\in A(G)\cap C_c(K\backslash G/K)$.

Given $a\in L(G)$ and $v\in A(G)$ we have the following equation:
$$
\la a, w_\alpha v\ra = \omega_{a,v,f}(u_\alpha) \to \omega_{a,v,f}(1) = \la a,v\ra.
$$
Hence $w_\alpha\to 1$ in the point-weak topology on $B_2(G)$. It follows from \cite[Corollary~VI.1.5]{MR0117523} that there is a net $(v_\beta)$ where each $v_\beta$ lies in the convex hull of $\{ w_\alpha\}$ such that $v_\beta \to 1$ in the point-norm topology. In other words, there is a net $(v_\beta)$ in $A(G)\cap C_c(K\backslash G/K)$ such that
$$
\| v_\beta v -  v\|_{A(G)} \to 0 \quad\text{for every } v\in A(G).
$$
If $G$ is a Lie group, we may further assume that $f\in C^\infty_c(G)$, in which case $v_\beta$ becomes smooth.
\end{proof}

\section{Simple Lie groups of higher real rank}\label{sec:higher-rank}

It is known that a connected simple Lie group of real rank at least two is not weakly amenable \cite{MR1418350},\cite{UH-preprint}. In fact, an even stronger result was proved recently \cite{MR2838352}, \cite{MR3047470},\cite{delaat2}. One could ask if such Lie groups also fail the weak Haagerup property. Using results from \cite{MR3047470},\cite{delaat2} we completely settle this question in the affirmative. We thus prove Theorems~\ref{thm:SL3} and Theorem~\ref{thm:simple-Lie-groups}.

\subsection{Three groups of real rank two}
We will prove that the three groups $\SL(3,\R)$, $\Sp(2,\R)$ and the universal covering group $\tilde\Sp(2,\R)$ of $\Sp(2,\R)$ do not have the weak Haagerup property. The cases of $\SL(3,\R)$ and $\Sp(2,\R)$ are similar and are treated together. The case of $\tilde\Sp(2,\R)$ is more difficult, essentially because $\tilde\Sp(2,\R)$ is not a matrix Lie group, and we will go into more details in this case.

When we consider the special linear group $\SL(3,\R)$, then $K = \SO(3)$ will be its maximal compact subgroup. We now describe the group $\Sp(2,\R)$ and a maximal compact subgroup. Consider the matrix $4\times 4$ matrix
$$
J = \begin{pmatrix}
0 & I_2 \\
- I_2 & 0
\end{pmatrix}
$$
where $I_2$ denotes the $2\times 2$ identity matrix. The symplectic group $\Sp(2,\R)$ is defined as
$$
\Sp(2,\R) = \{ g\in \GL(4,\R) \mid g^t J g = J \}.
$$
Here $g^t$ denotes the transpose of $g$. The symplectic group $\Sp(2,\R)$ is a connected simple Lie group of real rank two. It has a maximal compact subgroup
\begin{align}\label{eq:max-SP2}
K = \left\{ \begin{pmatrix}
A & -B \\
B & A
\end{pmatrix} \in M_4(\R) \mid A + iB \in \mr U(2) \right\}
\end{align}
which is isomorphic to $\mr U(2)$.

The following is immediate from \cite[Proposition~4.3, Lemma~A.1(2)]{K-WH}.
\begin{lem}\label{lem:SL3}
Let $G$ be locally compact group with a compact subgroup $K$. If $G$ has the weak Haagerup property, then there is a bounded net $(u_\alpha)$ in $B_2(G)\cap C_0(K\backslash G /K)$ such that $u_\alpha \to 1$ in the weak$^*$-topology.
\end{lem}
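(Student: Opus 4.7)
The plan is to start from a net witnessing the weak Haagerup property and modify it in two ways: first make it $K$-bi-invariant by averaging, then upgrade the convergence from uniform-on-compacts to the weak$^\ast$-topology using that the net is bounded in $B_2$-norm.

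Concretely, suppose $G$ has the weak Haagerup property, and fix a net $(u_\alpha)_{\alpha\in A}$ in $B_2(G)\cap C_0(G)$ with $\sup_\alpha\|u_\alpha\|_{B_2}\le C$ (where $C=\Lambda_\WH(G)$ or any larger constant) and $u_\alpha\to 1$ uniformly on compact sets. Let $dk$ denote normalized Haar measure on $K$ and define
$$
\tilde u_\alpha(x)=\int_K\!\int_K u_\alpha(k_1 x k_2)\,dk_1\,dk_2,\qquad x\in G.
$$
Then each $\tilde u_\alpha$ is continuous and $K$-bi-invariant by construction. This is the standard averaging trick, and the key point is that it preserves the Herz--Schur norm up to the factor $1$: $\|\tilde u_\alpha\|_{B_2}\le\|u_\alpha\|_{B_2}\le C$. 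This is the content of \cite[Lemma~A.1(2)]{K-WH}.

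Next I check that $\tilde u_\alpha\in C_0(G)$ and that $\tilde u_\alpha\to 1$ uniformly on compacts. For the first, fix $\epsilon>0$ and choose a compact set $L\subseteq G$ with $|u_\alpha(g)|<\epsilon$ for $g\notin L$; since $K$ is compact, $KLK$ is compact, and for $x\notin KLK$ the orbit $\{k_1 x k_2:k_i\in K\}$ avoids $L$, so $|\tilde u_\alpha(x)|<\epsilon$. For the second, if $F\subseteq G$ is compact then so is $KFK$, and
$$
\sup_{x\in F}|\tilde u_\alpha(x)-1|\le\sup_{y\in KFK}|u_\alpha(y)-1|\longrightarrow 0,
$$
because $1$ is $K$-bi-invariant and $u_\alpha\to 1$ uniformly on $KFK$.

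Finally, to pass from uniform convergence on compacts to $\sigma(B_2,Q)$-convergence, I invoke \cite[Proposition~4.3]{K-WH}: a bounded net in $B_2(G)$ that converges uniformly on compact subsets of $G$ converges in the weak$^\ast$-topology. Applied to $(\tilde u_\alpha)$, which is bounded by $C$ in Herz--Schur norm and converges to $1$ uniformly on compacts, this yields $\tilde u_\alpha\to 1$ in $\sigma(B_2,Q)$, giving the desired net in $B_2(G)\cap C_0(K\backslash G/K)$.

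The only substantive ingredient beyond bookkeeping is the passage to weak$^\ast$-convergence, but this is already packaged in the cited Proposition~4.3; the averaging step is routine once one knows that double integration over the compact group $K$ is a contraction on $B_2$, which is \cite[Lemma~A.1(2)]{K-WH}.
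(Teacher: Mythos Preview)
Your proof is correct and follows exactly the approach the paper indicates: the paper simply states that the lemma ``is immediate from \cite[Proposition~4.3, Lemma~A.1(2)]{K-WH},'' and you have spelled out precisely how those two ingredients combine (averaging over $K$ via Lemma~A.1(2), then passing to weak$^*$ convergence via Proposition~4.3). There is nothing to add.
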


We remind the reader that $B_2(G)$ coincides isometrically with the completely bounded Fourier multipliers $M_0A(G)$. The following result is then extracted from \cite[p.~937 + 957]{MR3047470}.
\begin{thm}[\cite{MR3047470}]\label{thm:HdL1}
If $G$ is one of the groups $\SL(3,\R)$ or $\Sp(2,\R)$ and $K$ is the corresponding maximal compact subgroup in $G$, then $B_2(G) \cap C_0(K\backslash G /K)$ is closed in $B_2(G)$ in the weak$^*$-topology.
\end{thm}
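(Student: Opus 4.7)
The overall plan is to exhibit $B_2(G)\cap C_0(K\backslash G/K)$ as an intersection of kernels of weak$^*$-continuous functionals on $B_2(G)$, hence as a weak$^*$-closed set.

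First I would establish that $K$-bi-invariance is itself a weak$^*$-closed condition. For fixed $k,k'\in K$, the left-right translation $u\mapsto u(k\cdot k')$ on $B_2(G)$ is weak$^*$-continuous because evaluation against $f\in L^1(G)$ is transported to integration against a modular-translated $L^1$-function; intersecting the coincidence sets $\{u\mid u(kgk')=u(g)\}$ over all $k,k'\in K$ and $g\in G$ gives that $B_2(G)\cap C(K\backslash G/K)$ is weak$^*$-closed in $B_2(G)$. Since Herz--Schur multipliers are automatically continuous, it suffices to show that the subset of $B_2(G)\cap C(K\backslash G/K)$ consisting of functions vanishing at infinity is weak$^*$-closed in it. Using the Cartan decomposition $G=K\overline{A^+}K$, this reduces to controlling the behavior of a $K$-bi-invariant $u\in B_2(G)$ along rays escaping to infinity in the closed positive Weyl chamber $\overline{A^+}$, a two-dimensional cone in both cases.

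The main technical input is extracted from \cite{MR3047470}: Haagerup and de Laat prove that for $G=\SL(3,\R)$ or $\Sp(2,\R)$, every $K$-bi-invariant Herz--Schur multiplier $u$ has a limit $\lim_{t\to\infty}u(a(\xi,t))$ along each ray of direction $\xi\in\overline{A^+}$, and, crucially, that these limits are given by bounded linear functionals $q_\xi$ on $B_2(G)$ which are represented by elements of the predual $Q(G)$. In effect, their explicit cb-norm estimates on radial multipliers come with integral representations that realize the ray-limit as pairing against a fixed compactly supported element of $L^1(G)$ (in the spirit of the $\omega_{a,v,f}$-functionals above), placing $q_\xi$ in $Q(G)$ and hence making it weak$^*$-continuous.

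Granting this, the proof concludes as follows: a continuous $K$-bi-invariant function $u$ vanishes at infinity on $G$ if and only if $\langle q_\xi,u\rangle=0$ for every ray direction $\xi\in\overline{A^+}$, so
$$
B_2(G)\cap C_0(K\backslash G/K) = \bigl(B_2(G)\cap C(K\backslash G/K)\bigr) \cap \bigcap_\xi \ker q_\xi,
$$
which is an intersection of weak$^*$-closed sets and therefore weak$^*$-closed in $B_2(G)$.

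The main obstacle is the second paragraph---showing that the ray-limit functionals belong to $Q(G)$. This is not formal: it depends on the precise structure theorem for $K$-bi-invariant completely bounded Fourier multipliers established by Haagerup and de Laat, which controls the cb-norm via the spherical expansion of $u$ near infinity in $\overline{A^+}$; the weak$^*$-continuity of $q_\xi$ then falls out of the resulting explicit integral formula. The first and third paragraphs are soft topological arguments; the entire content of the theorem lies in the harmonic-analytic result packaged as Step~3.
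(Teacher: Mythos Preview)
The paper gives no proof of this theorem; it is stated as a fact extracted from \cite[p.~937 + 957]{MR3047470}, so there is no argument in the paper to compare against. I will therefore assess your sketch on its own.

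The first and last paragraphs are fine, but there is a genuine gap in your third paragraph. You assert that a $K$-bi-invariant $u\in B_2(G)$ lies in $C_0(G)$ if and only if $\langle q_\xi,u\rangle=0$ for every ray direction $\xi$. The forward implication is trivial, but the reverse is not: for a continuous function on a two-dimensional cone, vanishing of the limit along every ray through the origin does \emph{not} force vanishing at infinity (for instance $\phi(y-x^2)$ on the first quadrant with $\phi$ a bump function). Something in the multiplier structure must rule this out, and you have not said what. You flag the $Q(G)$-membership of the $q_\xi$ as the hard point, but that is not where your argument actually breaks.

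What Haagerup--de~Laat in fact establish is stronger than mere ray-wise limits: they prove uniform H\"older-type estimates on $\overline{A^+}$ showing that for $K$-bi-invariant $u\in B_2(G)$ there is a \emph{single} constant $c_\infty(u)$ with $|u(a)-c_\infty(u)|\le C\,\|u\|_{B_2}\,\rho(a)$ for an explicit $\rho$ tending to zero at infinity in $\overline{A^+}$, and they show $u\mapsto c_\infty(u)$ lies in $Q(G)$. With this, $u\in C_0(G)\Leftrightarrow c_\infty(u)=0$ is immediate, and your concluding intersection argument works with the single functional $c_\infty$ in place of the family $q_\xi$. So the repair is to sharpen your appeal to \cite{MR3047470}: it is the uniformity of the limit, not its existence along individual rays, that carries the weak$^*$-closedness.
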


\begin{thm}\label{thm:WH-SL3}
The groups $\SL(3,\R)$ and $\Sp(2,\R)$ do not have the weak Haagerup property.
\end{thm}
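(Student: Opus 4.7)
The plan is to combine Lemma~\ref{lem:SL3} with Theorem~\ref{thm:HdL1} in order to derive a contradiction from the assumption that $G = \SL(3,\R)$ or $G = \Sp(2,\R)$ has the weak Haagerup property.

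Suppose, for contradiction, that $G$ has the weak Haagerup property, and let $K$ be the maximal compact subgroup described above. By Lemma~\ref{lem:SL3}, I can extract a bounded net $(u_\alpha)$ in $B_2(G)\cap C_0(K\backslash G/K)$ converging in the weak$^*$-topology on $B_2(G)$ to the constant function $1_G$. Note that $1_G$ is indeed a member of $B_2(G)$, since $1_G\in B(G)$ and $B(G)\subseteq B_2(G)$ with contractive inclusion.

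By Theorem~\ref{thm:HdL1}, the subset $B_2(G)\cap C_0(K\backslash G/K)$ is weak$^*$-closed in $B_2(G)$. Therefore the weak$^*$-limit of the net $(u_\alpha)$ must belong to $B_2(G)\cap C_0(K\backslash G/K)$, forcing $1_G \in C_0(G)$. This is absurd because $G$ is non-compact, so the constant function $1_G$ does not vanish at infinity. The assumption that $G$ has the weak Haagerup property is untenable, proving the theorem.

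I do not anticipate any real obstacle: the essential content, namely the weak$^*$-closedness of $B_2(G)\cap C_0(K\backslash G/K)$ for $G = \SL(3,\R)$ and $\Sp(2,\R)$, is precisely what Theorem~\ref{thm:HdL1} provides, and Lemma~\ref{lem:SL3} packages the weak Haagerup property into exactly the averaged form needed. The only points requiring care are to verify that the averaging over the compact group $K$ can be performed without losing the bound on the Herz--Schur norm (which is built into Lemma~\ref{lem:SL3}) and to note that constants do belong to $B_2(G)$, so that the statement ``weak$^*$-limit equals $1_G$'' is meaningful inside $B_2(G)$.
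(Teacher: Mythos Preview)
Your proof is correct and follows essentially the same route as the paper's own argument: assume the weak Haagerup property, invoke Lemma~\ref{lem:SL3} to obtain a bounded $K$-bi-invariant $C_0$ net converging weak$^*$ to $1_G$, and then use the weak$^*$-closedness from Theorem~\ref{thm:HdL1} to force the contradiction $1_G\in C_0(G)$. The paper's version is slightly terser but the logic is identical.
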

\begin{proof}
Let $G$ be one of the groups $\SL(3,\R)$ or $\Sp(2,\R)$. Obviously, $1 \notin B_2(G) \cap C_0(K\backslash G /K)$. Since $B_2(G) \cap C_0(K\backslash G /K)$ is weak$^*$-closed, there can be no net $u_\alpha \in B_2(G) \cap C_0(K\backslash G /K)$ such that $u_\alpha \to 1$ in the weak$^*$-topology. Using Lemma~\ref{lem:SL3}, we conclude that $G$ does not have the weak Haagerup property.
\end{proof}

\begin{rem}
An alternative proof of Theorem~\ref{thm:WH-SL3} for the group $\SL(3,\R)$, avoiding the use of the difficult Theorem~\ref{thm:HdL1}, is to use the fact that $\R^2\rtimes\SL(2,\R)$ is a closed subgroup of $\SL(3,\R)$. From Theorem~\ref{thm:SL2} (to be proved in Section~\ref{sec:SL2}), we know that $\R^2\rtimes\SL(2,\R)$ does not have the weak Haagerup property, and this is sufficient to conclude that $\SL(3,\R)$ also fails to have the weak Haagerup property (see \cite[Theorem~A(1)]{K-WH}).
\end{rem}

We now turn to the case of $\tilde\Sp(2,\R)$. To ease notation a bit, in the rest of this section we let $G = \Sp(2,\R)$ and $\tilde G = \tilde\Sp(2,\R)$. We now describe the group $\tilde G$. This is based on \cite{MR2944792} and \cite[Section~3]{delaat2}.

By definition, $\tilde G$ is the universal covering group of $G$. The group $G$ has fundamental group $\pi_1(G) \simeq \pi_1(\mr U(2))$ which is the group $\Z$ of integers. There is a smooth function $c:G\to \mb T$, where $\mb T$ denotes the unit circle in $\C$, such that $c$ induces an isomorphism of the fundamental groups of $G$ and $\mb T$ (such a $c$ is called a circle function). An explicit description of $c$ can be found in \cite{MR2944792} and \cite{delaat2}. The circle function $c$ satisfies
$$
c(1) = 1 \quad\text{and}\quad c(g^{-1}) = c(g)^{-1}.
$$
There is a unique smooth map $\eta:G\times G\to \R$ such that
$$
c(g_1g_2) = c(g_1)c(g_2)e^{i\eta(g_1,g_2)}  \quad\text{and}\quad \eta(1,1) = 0
$$
for all $g_1,g_2\in G$. The map $\eta$ is also explicitly described in \cite{MR2944792} and \cite{delaat2}. The universal cover $\tilde G$ of $G$ can be realized as the smooth manifold
$$
\tilde G = \{(g,t) \in G\times \R \mid c(g) = e^{it} \}
$$
with multiplication given by
$$
(g_1,t_1)(g_2,t_2) = ( g_1g_2 , t_1 + t_2 + \eta(g_1,g_2)).
$$
The identity in $\tilde G$ is $(1,0)$, where $1$ denotes the identity in $G$, and the inverse is given by $(g,t)^{-1} = (g^{-1},-t)$. The map $\sigma:\tilde G\to G$ given by $\sigma(g,t) = g$ is the universal covering homomorphism, and the kernel of $\sigma$ is $\{(1,2\pi k) \in G\times\R \mid k\in\Z\}$, which is of course isomorphic to $\Z$.

Let $K$ be the maximal compact subgroup of $G$ given in \eqref{eq:max-SP2}. Then one can show that
\begin{align}\label{eq:eta-U}
\eta(g,h) = 0 \quad\text{for all } g,h\in K.
\end{align}
Under the obvious identification $K\simeq\mr U(2)$, we consider $\SU(2) \subseteq \mr U(2)$ as a subgroup of $K$. Define a compact subgroup $\tilde H$ of $\tilde G$ by
$$
\tilde H = \{(g,0) \in G\times\R \mid g\in \SU(2) \}.
$$
By \eqref{eq:eta-U} $\tilde H$ is indeed a subgroup of $\tilde G$.

When $t\in\R$ let $v_t \in G$ be the element
$$
v_t = \begin{pmatrix}
\cos t & 0 & -\sin t & 0 \\
0 & \cos t & 0 & -\sin t \\
\sin t & 0 & \cos t & 0 \\
0 & \sin t & 0 & \cos t
\end{pmatrix},
$$
and define $\tilde v_t = (v_t,2t) \in \tilde G$. Then $\eta(v_t,g) = \eta(g,v_t) = 0$ for any $g\in G$. Obviously, $(\tilde v_t)_{t\in\R}$ is a one-parameter family in $\tilde G$, and it is a simple matter to check that conjugation by $\tilde v_t$ is $\pi$-periodic. A simple computation will also show that if $g\in K$, then $gv_t = v_tg$ and hence $h\tilde v_t = \tilde v_t h$ for every $h\in\tilde H$.

Consider the subspace $\mc C$ of $C(\tilde G)$ defined by
$$
\mc C = \{ u\in C(\tilde G) \mid u \text{ is } \tilde H\text{-bi-invariant and } u(\tilde v_t g \tilde v_t^{-1}) = u(g) \text{ for all } t\in\R  \}.
$$
Further, we let $\mc C_0 = \mc C \cap C_0(\tilde G)$. For any $f\in C(\tilde G)$ or $f\in L^1(\tilde G)$, let $f^{\mc C}:\tilde G \to \C$ be defined by
$$
f^{\mc C}(x) = \frac{1}{\pi} \int_0^\pi\!\! \int_{\tilde H} \int_{\tilde H} f(h_1\tilde v_t x \tilde v_t^{-1} h_2) \ dh_1 dh_2 dt,
\quad x\in \tilde G,
$$
where $dh_1$ and $dh_2$ both denote the normalized Haar measure on the compact group $\tilde H$.

\begin{lem}\label{lem:natural}
With the notation as above the following holds:

\begin{enumerate}
	\item If $u\in C(\tilde G)$, then $u^{\mc C} \in \mc C$.
	\item If $f\in L^1(\tilde G)$, then $f^{\mc C}\in L^1(\tilde G)$ and $\|f^{\mc C}\|_Q \leq \|f\|_Q$.
	\item If $u\in B_2(\tilde G)$, then $u^{\mc C}\in B_2(\tilde G)$ and $\|u^{\mc C}\|_{B_2} \leq \|u\|_{B_2}$.
	\item If $u\in C_0(\tilde G)$, then $u^{\mc C} \in C_0(\tilde G)$.
\end{enumerate}
\end{lem}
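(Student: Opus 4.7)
The plan is to verify the four claims in the order (1), (3), (2), (4), since (2) will follow from (3) via duality. I will use three structural ingredients: (i) $\tilde v_s\tilde v_t = \tilde v_{s+t}$, because $\eta(v_s,v_t)=0$ for elements of the same one-parameter family; (ii) conjugation by $\tilde v_t$ is $\pi$-periodic in $t$; and (iii) $\tilde G$ is unimodular, as the universal cover of the semisimple Lie group $\Sp(2,\R)$, so Haar measure on $\tilde G$ is invariant under inversion and under conjugation by any fixed element.

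For (1), continuity of $u^{\mc C}$ follows from dominated convergence on the compact domain $[0,\pi]\times\tilde H\times\tilde H$. For $\tilde H$-bi-invariance, I would use the commutation $\tilde v_t h = h\tilde v_t$ (valid for $h\in\tilde H$) to move $h$ past $\tilde v_t$, then translate in the $h_1$ or $h_2$ variable using the translation-invariance of Haar measure on the compact group $\tilde H$. For conjugation invariance, I would apply (i) to rewrite $h_1\tilde v_t\tilde v_s x\tilde v_s^{-1}\tilde v_t^{-1}h_2 = h_1\tilde v_{t+s}x\tilde v_{t+s}^{-1}h_2$ and absorb the shift $t\mapsto t-s$ using (ii).

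For (3), the key point is that left translation, right translation, and conjugation by a fixed group element all preserve the Herz--Schur norm, so $\|u(h_1\tilde v_t\cdot\tilde v_t^{-1}h_2)\|_{B_2} = \|u\|_{B_2}$ for every $(h_1,t,h_2)$. Approximating the integral defining $u^{\mc C}$ by Riemann sums exhibits $u^{\mc C}$ as a pointwise limit of convex combinations of such translates, each of Herz--Schur norm at most $\|u\|_{B_2}$; since the unit ball of $B_2(\tilde G) = Q(\tilde G)^*$ is weak$^*$-closed, this yields $\|u^{\mc C}\|_{B_2}\leq\|u\|_{B_2}$. For (2), I would apply Fubini to $\langle f^{\mc C},u\rangle$ and make the measure-preserving change of variable $z = h_1\tilde v_tx\tilde v_t^{-1}h_2$ (measure-preserving by (iii)); after the further substitutions $h_i\mapsto h_i^{-1}$ (Haar measure on the compact $\tilde H$ is inversion-invariant) and $t\mapsto -t$ (allowed by (ii) together with $\tilde v_t^{-1}=\tilde v_{-t}$), the inner bracket becomes $u^{\mc C}(z)$, establishing the duality $\langle f^{\mc C},u\rangle = \langle f,u^{\mc C}\rangle$. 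Combined with (3), this gives $\|f^{\mc C}\|_Q\leq\|f\|_Q$.

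For (4), observe that $\tilde H$ and $\{\tilde v_t:t\in[0,\pi]\}$ are compact in $\tilde G$, so the map $(h_1,t,h_2,x)\mapsto h_1\tilde v_tx\tilde v_t^{-1}h_2$ is proper in $x$: whenever $x$ leaves a fixed compact set, $h_1\tilde v_tx\tilde v_t^{-1}h_2$ does as well, uniformly in $(h_1,t,h_2)$; a routine $\epsilon$/$\delta$ argument then yields $u^{\mc C}\in C_0(\tilde G)$. The main obstacle is the norm bound in (3) (and therefore in (2)): once one accepts that left and right translations and inner automorphisms act isometrically on $B_2$, the rest reduces to a convex-combination approximation together with weak$^*$-closedness of the $B_2$-unit ball.
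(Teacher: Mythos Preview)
Your proof is correct and follows essentially the same route as the paper, including the order (1), (3), (2), (4) and the duality relation $\langle f^{\mc C},u\rangle = \langle f,u^{\mc C}\rangle$ used to deduce the $Q$-norm bound from the $B_2$-norm bound. The only notable difference is in the mechanics of (3): where you approximate $u^{\mc C}$ by Riemann sums of translates and pass to the limit using weak$^*$-closedness of the $B_2$-ball, the paper instead invokes a vector-valued integration theorem (Rudin, \emph{Functional Analysis}, Theorem~3.27) to conclude directly that $u^{\mc C}$ lies in the weak$^*$-closed convex hull of the set $\{u_\alpha\}$; both arguments encode the same idea.
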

\begin{proof}
\mbox{}

(1) This is elementary.

(2) Suppose $f\in L^1(\tilde G)$. Connected simple Lie groups are unimodular (see \cite[Corollary~8.31]{MR1920389}), and hence each left or right translate of $f$ is also in $L^1(\tilde G)$ with the same norm. Since left and right translation on $L^1(G)$ is norm continuous, it now follows from usual Banach space integration theory that $f^{\mc C} \in L^1(\tilde G)$.

We complete the proof of (2) after we have proved (3).

(3) This statement is implicit in \cite{delaat2} in the proof of \cite[Lemma~3.10]{delaat2}. We have chosen to include a proof.

For each $g\in B_2(\tilde G)$ or $g\in L^1(\tilde G)$ and $\alpha = (h_1,h_2,t)\in \tilde H\times\tilde H\times\R$ define
$$
g_\alpha(x) = g(h_1\tilde v_t x \tilde v_t^{-1} h_2), \quad x\in \tilde G.
$$
If $g\in B_2(\tilde G)$, then $g_\alpha\in B_2(\tilde G)$ and $\|g_\alpha\|_{B_2} = \|g\|_{B_2}$. Similarly, if $g\in L^1(\tilde G)$, then $g_\alpha\in L^1(\tilde G)$ and $\|g_\alpha\|_1 = \|g\|_1$. Note also that $\la g,f_\alpha \ra = \la g_{\alpha^{-1}} , f\ra$ where $\alpha^{-1} = (h_1^{-1},h_2^{-1}, -t)$. In particular,
$$
|\la g,u_\alpha \ra - \la g,u_\beta\ra| \leq \|g_{\alpha^{-1}} - g_{\beta^{-1}} \|_1 \|u\|_{B_2}
$$
for $\alpha,\beta \in \tilde H\times\tilde H\times\R$, and $\alpha\mapsto u_\alpha$ is weak$^*$-continuous.

The set $S = \{ u_\alpha \mid \alpha\in \tilde H\times\tilde H\times[0,\pi] \}$ is a norm bounded subset of $B_2(\tilde G)$. If $T = \overline{\mr{conv}}^{\sigma(B_2,Q)}(S)$ is the weak$^*$-closed convex hull of $S$, then $T$ is weak$^*$-compact by Banach--Alaoglu's Theorem. By \cite[Theorem~3.27]{MR1157815} the integral
$$
u^{\mc C} = \frac{1}{\pi}\int_{\tilde H\times\tilde H\times[0,\pi]} u_\alpha \ d\mu(\alpha)
$$
exists in $B_2(\tilde G)$. Here $d\mu(\alpha) = dh_1dh_2dt$. Since the set $T$ is bounded in norm by $\|u\|_{B_2}$, and because it follows from \cite[Theorem~3.27]{MR1157815} that $u^{\mc C}\in T$, we obtain the inequality $\|u^{\mc C}\|_{B_2} \leq \|u\|_{B_2}$.


(2) Continued. Let $u\in B_2(\tilde G)$ be arbitrary. Observe that $\la f^{\mc C}, u\ra = \la f,u^{\mc C} \ra$. Hence the norm estimate $\|f^{\mc C}\|_Q \leq \|f\|_Q$ follows from (3).

(4) This is elementary.
\end{proof}

\begin{prop}\label{prop:Sp2}
If $\tilde G$ had the weak Haagerup property, then there would exist a bounded net $(v_\alpha)$ in $B_2(\tilde G) \cap \mc C_0$ such that $v_\alpha \to 1$ in the weak$^*$-topology.
\end{prop}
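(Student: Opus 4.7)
The plan is to start from a net $(u_\alpha)$ in $B_2(\tilde G)\cap C_0(\tilde G)$ witnessing the weak Haagerup property---so $\sup_\alpha \|u_\alpha\|_{B_2}\leq C$ for some $C$ and $u_\alpha\to 1$ uniformly on compact sets---and push it through the averaging operator $u\mapsto u^{\mc C}$ defined just before Lemma~\ref{lem:natural}. Setting $v_\alpha = u_\alpha^{\mc C}$, parts~(1), (4), and~(3) of Lemma~\ref{lem:natural} respectively place $v_\alpha$ in $\mc C$, in $C_0(\tilde G)$, and in $B_2(\tilde G)$ with $\|v_\alpha\|_{B_2}\leq\|u_\alpha\|_{B_2}\leq C$. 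Hence $(v_\alpha)$ is the required bounded net in $B_2(\tilde G)\cap\mc C_0$, and the only remaining task is to verify that $v_\alpha \to 1$ in the $\sigma(B_2,Q)$-topology.

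The first substep is the pointwise (in fact uniform-on-compacts) statement $v_\alpha\to 1$. For any compact $K_0\subseteq\tilde G$, the argument of $u_\alpha$ in the defining integral of $u_\alpha^{\mc C}$ ranges over the compact set
\[
K_1 = \tilde H\cdot\{\tilde v_t : t\in[0,\pi]\}\cdot K_0 \cdot \{\tilde v_t^{-1} : t\in[0,\pi]\}\cdot\tilde H,
\]
on which $u_\alpha\to 1$ uniformly; combined with the uniform bound $|u_\alpha|\leq C$ and the fact that $\tilde H\times\tilde H\times[0,\pi]$ has finite measure, dominated convergence applied inside the defining integral yields $v_\alpha\to 1$ uniformly on $K_0$.

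To upgrade uniform-on-compacts convergence to $\sigma(B_2,Q)$-convergence, I would exploit that $L^1(\tilde G)$ is $\|\cdot\|_Q$-dense in $Q(\tilde G)$ by construction, and that compactly supported $L^1$-functions are $\|\cdot\|_1$-dense---hence $\|\cdot\|_Q$-dense via the elementary estimate $\|f\|_Q\leq\|f\|_1$---in $L^1(\tilde G)$. For a compactly supported $f\in L^1(\tilde G)$, a second application of dominated convergence gives $\la f,v_\alpha\ra\to\la f,1\ra$ directly, and the general case follows by a standard $3\varepsilon$-argument using the uniform bound $\|v_\alpha-1\|_{B_2}\leq C+1$. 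The conceptual work has already been absorbed into Lemma~\ref{lem:natural}, so the main obstacle here is essentially bookkeeping: one must carefully thread together the $B_2$-boundedness of the averaged net, the uniform-on-compacts convergence, and the $\|\cdot\|_1 \geq \|\cdot\|_Q$ inequality to produce weak$^*$-convergence; no further substantive machinery is required.
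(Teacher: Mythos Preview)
Your argument is correct, but it follows a different route from the paper's. The paper first invokes \cite[Proposition~4.2]{K-WH} to obtain a bounded net $(u_\alpha)$ in $B_2(\tilde G)\cap C_0(\tilde G)$ that already converges to $1$ in the $\sigma(B_2,Q)$-topology; it then uses part~(2) of Lemma~\ref{lem:natural} to extend the averaging map $f\mapsto f^{\mc C}$ from $L^1(\tilde G)$ to a contraction $R\colon Q(\tilde G)\to Q(\tilde G)$, observes that the adjoint $R^*\colon B_2(\tilde G)\to B_2(\tilde G)$ satisfies $R^*u=u^{\mc C}$ and is automatically weak$^*$-continuous, and concludes $u_\alpha^{\mc C}=R^*u_\alpha\to R^*1=1$ in one line. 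You instead work directly from uniform-on-compacts convergence, push it through the averaging by the elementary compactness observation that $\tilde H\cdot\{\tilde v_t\}\cdot K_0\cdot\{\tilde v_t^{-1}\}\cdot\tilde H$ is compact, and only at the end upgrade to $\sigma(B_2,Q)$-convergence via the density of $L^1_c(\tilde G)$ in $Q(\tilde G)$ and a $3\varepsilon$ argument against the uniform bound $\|v_\alpha-1\|_{B_2}\leq C+1$. Your approach is a bit more hands-on and self-contained---it needs neither the external reference nor part~(2) of Lemma~\ref{lem:natural}---whereas the paper's duality argument is slicker and makes clear why the $Q$-norm estimate in Lemma~\ref{lem:natural}(2) was recorded in the first place.
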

\begin{proof}
We suppose $\tilde G$ has the weak Haagerup property. Using \cite[Proposition~4.2]{K-WH} we see that there exist a constant $C > 0$ and a net $(u_\alpha)$ in $B_2(\tilde G)\cap C_0(\tilde G)$ such that
$$
\|u_\alpha\|_{B_2} \leq C  \quad\text{ for every } \alpha,
$$
$$
u_\alpha\to 1 \text{ in the }\sigma(B_2,Q)\text{-topology}.
$$
Let $u_\alpha^{\mc C}$ be given by
$$
u_\alpha^{\mc C}(x) = \frac{1}{2\pi} \int_0^{2\pi}\!\!\! \int_{\tilde H\times \tilde H} u_\alpha(h_1\tilde v_t g \tilde v_t^{-1} h_2) \ dh_1 dh_2 dt, \quad x\in \tilde G,
$$
where $dh_1$ and $dh_2$ both denote the normalized Haar measure on $\tilde H$. By Lemma~\ref{lem:natural} we see that $u_\alpha^{\mc C} \in B_2(\tilde G) \cap \mc C_0$ and that $(u_\alpha^{\mc C})$ is a bounded net. Thus, it suffices to prove that $u_\alpha^{\mc C} \to 1$ in the weak$^*$-topology.

By Lemma~\ref{lem:natural}, the map $L^1(\tilde G)\to L^1(\tilde G)$ given by $f\mapsto f^{\mc C}$ extends uniquely to a linear contraction $R:Q(\tilde G) \to Q(\tilde G)$. The dual operator $R^* : B_2(\tilde G)\to B_2(\tilde G)$ obviously satisfies $R^* v = v^{\mc C}$ and is weak$^*$-continuous. Hence
$$
\la f , u_\alpha^{\mc C} \ra = \la f,R^*u_\alpha\ra \to \la f,R^* 1\ra = \la f, 1\ra
$$
for any $f\in Q(G)$. This proves that $u_\alpha^{\mc C} \to 1$ in the weak$^*$-topology.
\end{proof}

For $\beta \geq \gamma \geq 0$, we let $D(\beta,\gamma)$ denote the element in $G$ given as
$$
D(\beta,\gamma) = \begin{pmatrix}
e^\beta & 0 & 0 & 0 \\
0 & e^\gamma & 0 & 0 \\
0 & 0 & e^{-\beta} & 0 \\
0 & 0 & 0 & e^{-\gamma} \\
\end{pmatrix}.
$$
We define $\tilde D(\beta,\gamma)$ as the element $(D(\beta,\gamma),0)$ in $\tilde G$. Let $u\in B_2(\tilde G) \cap \mc C$ be given. If we put
$$
\dot u(\beta,\gamma,t) = u(\tilde v_{\frac t2} \tilde D(\beta,\gamma)),
$$
then it is shown in \cite[Proposition~3.11]{delaat2} that the limit $\lim_{s\to\infty} \dot u (2s,s,t)$ exists for any $t\in\R$. If we let
$$
\mc T = \{ u \in B_2(\tilde G) \cap \mc C \mid \lim_{s\to\infty} \dot u (2s,s,t) = 0 \text{ for all }t\in\R \},
$$
then we can phrase part of the main result of \cite{delaat2} in the following way.
\begin{lem}[{\cite[Lemma~3.12]{delaat2}}]\label{lem:delaat2}
The space $\mc T$ is closed in the weak$^*$-topology.
\end{lem}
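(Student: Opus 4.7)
My plan is to prove weak$^*$-closedness of $\mc T$ in two stages, matching the two defining conditions.

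\emph{Stage 1: $B_2(\tilde G) \cap \mc C$ is weak$^*$-closed.} By Lemma~\ref{lem:natural}(2), the averaging $f\mapsto f^{\mc C}$ extends to a linear contraction $R$ on $Q(\tilde G)$. A direct Fubini computation (using that $\tilde G$ is unimodular, that $\tilde H$ is compact, and that conjugation by $\tilde v_t$ is $\pi$-periodic so that the integral over $[0,\pi]$ is invariant under $t\mapsto -t$) yields $\la Rf, u\ra = \la f, u^{\mc C}\ra$ for $f\in L^1(\tilde G)$ and $u\in B_2(\tilde G)$, so the adjoint $R^*$ on $B_2(\tilde G)$ satisfies $R^*u = u^{\mc C}$ and is weak$^*$-continuous. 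Since a continuous $u\in B_2(\tilde G)$ lies in $\mc C$ precisely when $R^*u = u$, we have $B_2(\tilde G)\cap\mc C = \ker(I - R^*)$, a weak$^*$-closed subspace.

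\emph{Stage 2: The asymptotic vanishing is a weak$^*$-closed condition.} By Krein--Smulian it suffices to show that $\mc T$ meets every norm-ball of $B_2(\tilde G)$ in a weak$^*$-closed set, and since $\tilde G$ is second countable $Q(\tilde G)$ is separable, so such balls are weak$^*$-metrizable and sequential arguments are enough. My strategy is to exhibit, for each $t\in\R$, a functional $\phi_t\in Q(\tilde G)$ implementing the asymptotic limit:
$$
\la \phi_t, u\ra \;=\; \lim_{s\to\infty}\dot u(2s,s,t) \qquad \text{for every } u\in B_2(\tilde G)\cap\mc C.
$$
Granted this, $\mc T = (B_2(\tilde G)\cap\mc C) \cap \bigcap_{t\in\R} \ker \phi_t$ is manifestly weak$^*$-closed, which completes the proof.

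The main obstacle is constructing $\phi_t$: pointwise evaluation at the element $\tilde v_{t/2}\tilde D(2s,s)$ is \emph{not} weak$^*$-continuous on $B_2(\tilde G)$ (Dirac masses lie in $B_2(\tilde G)^*$ but generally not in $Q(\tilde G)$), so all the regularity must come from passing to the limit $s\to\infty$. Working in $KAK$-type coordinates on $\tilde G$ and using the explicit circle function $c$, the cocycle $\eta$, and the commutation $\tilde v_t h = h\tilde v_t$ for $h\in\tilde H$, one rewrites $\dot u(2s,s,t)$ as the integral of $u$ against a compactly supported measure supported on a specific double coset; after averaging over $\tilde H\times\tilde H\times[0,\pi]$ (which leaves $u\in\mc C$ fixed) the mass of this measure spreads along a nilpotent direction as $s\to\infty$ and converges, in the $Q(\tilde G)$-norm, to an honest $L^1$-density, and this limit density represents the desired $\phi_t$. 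Establishing both the convergence and the requisite $L^1$-bounds uniformly on norm-bounded sets is the technical heart, corresponding to Proposition~3.11 in \cite{delaat2}.
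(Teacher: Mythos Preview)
The paper does not prove this lemma at all: it is quoted verbatim from \cite[Lemma~3.12]{delaat2} and used as a black box. So there is no in-paper argument to compare against; the relevant question is whether your sketch stands on its own.

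Your Stage~1 is fine and is essentially the content of Proposition~\ref{prop:Sp2}: the averaging map $R^*$ is the weak$^*$-continuous adjoint of a contraction on $Q(\tilde G)$, and $B_2(\tilde G)\cap\mc C = \ker(I-R^*)$.

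Stage~2, however, is not a proof but a description of what a proof would have to accomplish. You assert that the point masses at $\tilde v_{t/2}\tilde D(2s,s)$, after $\mc C$-averaging, converge in the $Q(\tilde G)$-norm to an $L^1$-density $\phi_t$ as $s\to\infty$; this is exactly the hard analytic statement, and you do not carry it out --- you yourself flag it as ``the technical heart, corresponding to Proposition~3.11 in \cite{delaat2}.'' Without that construction your argument is circular: the existence of $\phi_t\in Q(\tilde G)$ with $\la\phi_t,u\ra=\lim_{s\to\infty}\dot u(2s,s,t)$ is essentially equivalent to the lemma you are trying to prove. Note also that your invocation of Krein--Smulian is superfluous under your own plan: if $\phi_t\in Q(\tilde G)$ genuinely exists, then $\mc T=(B_2(\tilde G)\cap\mc C)\cap\bigcap_t\ker\phi_t$ is weak$^*$-closed outright, with no need to reduce to bounded balls or sequences. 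If instead you only expect the limit to be controlled on norm-bounded sets, then $\phi_t$ need not lie in $Q(\tilde G)$ and the formula $\mc T=\bigcap_t\ker\phi_t$ loses its meaning; you would have to argue differently. Either way, the substance is missing.

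In short: the paper offers no proof to compare with, and your proposal correctly identifies the architecture used in \cite{delaat2} but does not supply the analysis that makes it work.
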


Using Lemma~\ref{lem:delaat2}, it is not hard to show that $\tilde G$ does not have the weak Haagerup property. The argument goes as follows.

Obviously, $1\notin \mc T$. We claim that $B_2(\tilde G) \cap \mc C_0 \subseteq \mc T$. Indeed, if $u\in C_0(\tilde G)$ and $t\in\R$, then
$$
\dot u(2s,s,t) = u(\tilde v_{\frac t2} \tilde D(2s,s)) \to 0 \quad\text{ as } s\to\infty.
$$
Since $\mc T$ is weak$^*$-closed, we conclude by Proposition~\ref{prop:Sp2} that $\tilde G$ does not have the weak Haagerup property.

\begin{thm}
The group $\tilde G = \tilde\Sp(2,\R)$ does not have the weak Haagerup property.
\end{thm}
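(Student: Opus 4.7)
My plan is to derive a contradiction from the assumption that $\tilde G$ has the weak Haagerup property, combining the reduction given by Proposition~\ref{prop:Sp2} with the weak$^*$-closedness of $\mc T$ from Lemma~\ref{lem:delaat2}. All the heavy lifting has already been done in the preceding lemmas, so the proof of the theorem itself should amount to only a handful of lines.

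First I would invoke Proposition~\ref{prop:Sp2} to produce, under the assumption, a bounded net $(v_\alpha)$ in $B_2(\tilde G)\cap\mc C_0$ with $v_\alpha\to 1$ in the $\sigma(B_2,Q)$-topology. The central step is then to verify the inclusion $B_2(\tilde G)\cap\mc C_0 \subseteq \mc T$. For $u\in\mc C_0$ and fixed $t\in\R$, I would observe that
$$
\dot u(2s,s,t) = u(\tilde v_{t/2}\,\tilde D(2s,s)).
$$
Since the entries of $D(2s,s)$ blow up as $s\to\infty$, the matrix $D(2s,s)$ escapes every compact subset of $G$; and because the covering map $\sigma:\tilde G\to G$ sends compact sets to compact sets, the element $\tilde D(2s,s)$, and hence its left translate $\tilde v_{t/2}\,\tilde D(2s,s)$ by the fixed element $\tilde v_{t/2}$, escapes every compact subset of $\tilde G$. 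As $u\in C_0(\tilde G)$, it follows that $\dot u(2s,s,t)\to 0$, so $u\in\mc T$.

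Once the inclusion is in hand, the rest is immediate: $(v_\alpha)$ is a net in $\mc T$, which is weak$^*$-closed by Lemma~\ref{lem:delaat2}, so the weak$^*$-limit $1$ must also lie in $\mc T$. But the constant function $1$ satisfies $\dot 1(2s,s,t)=1$ for every $s$ and $t$, so $1\notin\mc T$, giving the desired contradiction. I do not anticipate any real obstacle at this stage; every genuinely delicate point — the construction of $\mc C_0$, the weak$^*$-continuity of the averaging operator $u\mapsto u^{\mc C}$, and the weak$^*$-closedness of $\mc T$ — has already been dealt with in Lemma~\ref{lem:natural}, Proposition~\ref{prop:Sp2}, and Lemma~\ref{lem:delaat2}, respectively.
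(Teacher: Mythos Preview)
Your proposal is correct and follows essentially the same argument as the paper: establish the inclusion $B_2(\tilde G)\cap\mc C_0\subseteq\mc T$ by noting that $\tilde v_{t/2}\tilde D(2s,s)\to\infty$ in $\tilde G$, then combine Proposition~\ref{prop:Sp2} with the weak$^*$-closedness of $\mc T$ (Lemma~\ref{lem:delaat2}) and the fact that $1\notin\mc T$. The only difference is that you spell out, via the covering map, why $\tilde v_{t/2}\tilde D(2s,s)$ leaves every compact set, whereas the paper leaves this implicit.
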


\subsection{The general case}

Knowing that the three groups $\SL(3,\R)$, $\Sp(2,\R)$, and $\tilde\Sp(2,\R)$ do not have the weak Haagerup property, it is a simple matter to generalize this result to include all connected simple Lie groups of real rank at least two. The idea behind the general case is basically that inside any connected simple Lie group of real rank at least two one can find a subgroup that looks like one of the three mentioned groups. We will make this statement more precise now. The following is certainly well known.

\begin{lem}
Let $G$ be a connected simple Lie group of real rank at least two. Then $G$ contains a closed connected subgroup $H$ locally isomorphic to either $\SL(3,\R)$ or $\Sp(2,\R)$.
\end{lem}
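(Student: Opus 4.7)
The plan is to use the standard structure theory of semisimple Lie groups via the restricted root system. Let $\mf g$ denote the Lie algebra of $G$, with Cartan decomposition $\mf g = \mf k \oplus \mf p$ and Cartan involution $\theta$. Fix a maximal abelian subspace $\mf a \subset \mf p$ and let $\Sigma = \Sigma(\mf g,\mf a)$ be the restricted root system. Since $G$ is simple, $\Sigma$ is irreducible, and its rank is the real rank of $G$, hence at least $2$. The root system $\Sigma$ may be non-reduced (type $BC_n$), but this will not cause trouble.

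First I would produce a rank-two subsystem $\Sigma' \subset \Sigma$ which is closed (i.e. $\alpha,\beta\in\Sigma'$ and $\alpha+\beta\in\Sigma$ imply $\alpha+\beta\in\Sigma'$) and of reduced type $A_2$ or $B_2 = C_2$. This is a purely combinatorial step, verified by inspecting the classification of irreducible root systems: the types $A_n,D_n,E_6,E_7,E_8,F_4,G_2$ all contain $A_2$-subsystems (e.g.\ generated by two adjacent simple roots in the Dynkin diagram), while $B_n, C_n$, and $BC_n$ (for $n\ge 2$) contain a $B_2=C_2$-subsystem obtained from two consecutive simple roots. In each case, one checks that the chosen subsystem is closed in $\Sigma$.

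Next I would lift $\Sigma'$ to a Lie subalgebra. Denote the simple roots of $\Sigma'$ by $\alpha_1,\alpha_2$. For each $i$, choose $E_i\in\mf g_{\alpha_i}$, set $F_i = -\theta(E_i)\in\mf g_{-\alpha_i}$, and normalize so that $H_i = [E_i,F_i]\in\mf a$ satisfies $\alpha_i(H_i) = 2$. Let $\mf h\subset\mf g$ be the subalgebra generated by $\{E_i,F_i,H_i:i=1,2\}$. The point of the closedness of $\Sigma'$ is that any iterated bracket of generators which is supposed to vanish by the Serre relations for $\Sigma'$ lands in a root space $\mf g_\gamma$ with $\gamma\notin\Sigma$, hence automatically vanishes. (For instance, for $A_2$ one needs $[E_2,[E_2,E_1]]\in \mf g_{2\alpha_2+\alpha_1}=0$, and indeed $2\alpha_2+\alpha_1\notin\Sigma'$ combined with closedness forces $2\alpha_2+\alpha_1\notin\Sigma$.) So $\mf h$ is a quotient of the split real form of type $\Sigma'$, and since $H_1,H_2$ remain linearly independent in $\mf a$, Serre's theorem identifies $\mf h$ with $\mf{sl}(3,\R)$ if $\Sigma'=A_2$ and with $\mf{sp}(2,\R)$ if $\Sigma'=C_2$.

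Finally I would take $H$ to be the connected analytic subgroup of $G$ with Lie algebra $\mf h$ and argue it is closed. By construction, $\mf h$ is $\theta$-stable: $\mf h = (\mf h\cap\mf k)\oplus(\mf h\cap\mf p)$, since $\theta$ swaps $E_i$ with $-F_i$ and fixes (up to sign) $H_i$. Consequently $H$ is a $\theta$-stable semisimple subgroup in the sense of Harish-Chandra; such subgroups admit Cartan and Iwasawa decompositions compatible with those of $G$ and are therefore automatically closed. The main potential obstacle is keeping the two constructions in sync — namely, selecting $\Sigma'$ closed so that the Serre relations hold, and choosing the $\mf{sl}_2$-triples $\theta$-compatibly so that closedness of $H$ is automatic — but once this bookkeeping is arranged, the remaining verifications are standard.
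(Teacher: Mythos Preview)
Your approach is essentially the same as the paper's: locate a copy of $\mf{sl}(3,\R)$ or $\mf{sp}(2,\R)$ inside $\mf g$, pass to the analytic subgroup, and verify closedness. The difference is purely one of exposition --- the paper dispatches all three steps by citation (Margulis \cite[Proposition~1.6.2]{MR1090825} for the subalgebra, Helgason for the analytic subgroup, and Dorofaeff \cite[Corollary~1]{MR1418350} for closedness), whereas you sketch direct root-system arguments for the first and last steps.

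Two small remarks on your sketch. First, for $BC_n$ the recipe ``two consecutive simple roots'' produces a $BC_2$, not a reduced $C_2$; you instead want the subsystem $\{\pm 2e_i,\pm 2e_j,\pm e_i\pm e_j\}$, which is closed for the same parity reason you'd use elsewhere. Second, the closedness step is the least routine part: $\theta$-stability gives a compatible Cartan decomposition $H=K_1\exp(\mf p_1)$, and since $\mf p_1\subset\mf p$ is closed one reduces to showing $K_1$ is closed in $K$, which still needs an argument (e.g.\ compactness of $K_1$ when $G$ has finite center, or a reduction to the adjoint group). This is exactly what Dorofaeff's cited corollary handles, so you might either invoke it or fill in that detail explicitly.
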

\begin{proof}
Consider a connected simple Lie group $G$ of real rank at least two. It is well known that the Lie algebra of such a group contains one of the Lie algebras $\mf{sl}(3,\R)$ or $\mf{sp}(2,\R)$ (see \cite[Proposition~1.6.2]{MR1090825}). Hence there is a connected Lie subgroup $H$ of $G$ whose Lie algebra is either $\mf{sl}(3,\R)$ or $\mf{sp}(2,\R)$ (see \cite[Theorem~II.2.1]{MR514561}). By \cite[Theorem~II.1.11]{MR514561} we get that $H$ is locally isomorphic to $\SL(3,\R)$ or $\Sp(2,\R)$. It remains only to see that $H$ is closed. This is \cite[Corollary~1]{MR1418350}.
\end{proof}

\begin{repthm}{thm:simple-Lie-groups}
A connected simple Lie group has the weak Haagerup property if and only if it has real rank zero or one.
\end{repthm}
\begin{proof}
It is known that connected simple Lie groups of real rank zero and one have the weak Haagerup property. Indeed, connected simple Lie groups of real rank zero are compact, and connected simple Lie groups of real rank one are weakly amenable (see \cite{MR996553},\cite{MR1079871}). This is clearly enough to conclude that such groups have the weak Haagerup property. Thus, we must prove that connected simple Lie groups of real rank at least two do not have the weak Haagerup property.

Let $G$ be a connected simple Lie group of real rank at least two. Then $G$ contains a closed connected subgroup $H$ locally isomorphic to $\SL(3,\R)$ or $\Sp(2,\R)$. Because of \cite[Theorem~A(1)]{K-WH}, it is sufficient to show that $H$ does not have the weak Haagerup property.

Suppose first that $H$ is locally isomorphic to $\SL(3,\R)$. The fundamental group of $\SL(3,\R)$ has order two, and $\SL(3,\R)$ has trivial center. Hence the universal covering group of $\SL(3,\R)$ has center of order two, and $H$ must have finite center $Z$ of order one or two. Then $\SL(3,\R) \simeq H/Z$. Since $\SL(3,\R)$ does not have the weak Haagerup property, we deduce from \cite[Theorem~A(2)]{K-WH} that $H$ does not have the weak Haagerup property.

Suppose instead that $H$ is locally isomorphic to $\Sp(2,\R)$. Then there is a central subgroup $Z \subseteq \tilde\Sp(2,\R)$ such that $\tilde\Sp(2,\R) / Z \simeq H$. Since the center of $\tilde\Sp(2,\R)$ is isomorphic to $\pi_1(\Sp(2,\R))\simeq\Z$, every nontrivial subgroup of the center of $\tilde\Sp(2,\R)$ is infinite and of finite index. Hence, if $H$ has infinite center, then $\tilde\Sp(2,\R)\simeq H$. In that case, $H$ does not have the weak Haagerup property. Otherwise, $H$ has finite center $Z$, and then $H/Z \simeq \Sp(2,\R)/\{\pm 1\}$. Since $\Sp(2,\R)$ does not have the weak Haagerup property, we deduce from \cite[Theorem~A(2)]{K-WH} that $H$ does not have the weak Haagerup property.

\end{proof}

\section{Simple Lie groups of real rank one}\label{sec:rank-1}
In this section we compute the weak Haagerup constant of the groups $\Sp(1,n)$ and $\FF$. We thus prove Theorem~\ref{thm:rank-one}. Throughout this section $G$ denotes one of the groups $\Sp(1,n)$, $n\geq 2$ or $\FF$. The symbol $\R_+$ denotes the nonnegative reals, that is, $\R_+ = [0,\infty[$.

\subsection{Preparations}

The group $\Sp(1,n)$ is defined as the group of quaternion matrices of size $n+1$ that preserve the Hermitian form
$$
\la x,y \ra = \bar y_1 x_1 - \sum_{k=2}^{n+1} \bar y_k x_k, \quad x = (x_k)_{k=1}^{n+1}, y = (y_k)_{k=1}^{n+1} \in \mb H^{n+1}.
$$
Equivalently,
$$
\Sp(1,n) = \{ g\in \GL(n+1,\mb H) \mid g^* I_{1,n} g = I_{1,n} \}
$$
where $I_{1,n}$ is the $(n+1)\times(n+1)$ diagonal matrix
$$
I_{1,n} = \begin{pmatrix}
1 & & & \\
& -1 & & \\
& & \ddots & \\
& & & -1
\end{pmatrix}.
$$
The exceptional Lie group $\FF$ is described in \cite{MR560851}.

For details about general structure theory of semisimple Lie groups we refer to \cite[Chapters~VI-VII]{MR1920389} and \cite[Chapter~IX]{MR514561}. The proof of Theorem~\ref{thm:rank-one} builds on \cite{MR996553}, where \eqref{eq:WA} is proved. We adopt the following from \cite{MR996553}.

Recall that throughout this section $G$ denotes one of the connected simple real rank one Lie groups $\Sp(1,n)$, $n\geq 2$ or $\FF$. Let $\mf g$ be the Lie algebra of $G$. Let $\theta$ be a Cartan involution, $\mf g = \mf k \oplus \mf p$ the corresponding Cartan decomposition and $K$ the analytic subgroup corresponding to $\mf k$. Then $K$ is a maximal compact subgroup of $G$. Let $\mf a$ be a maximal abelian subalgebra of $\mf p$, and decompose $\mf g$ into root spaces,
$$
\mf g = \mf m \oplus \mf a \oplus \sum_{\beta\in\Sigma} \mf g_\beta,
$$
where $\mf m$ is the centralizer of $\mf a$ in $\mf k$ and $\Sigma$ is the set of roots. Then $\mf a$ is one dimensional and $\Sigma = \{-2\alpha,-\alpha,\alpha,2\alpha\}$. Let $\mf n = \mf g_\alpha \oplus \mf g_{2\alpha}$. We have the Iwasawa decomposition at the Lie algebra level
$$
\mf g = \mf k \oplus \mf a \oplus \mf n
$$
and at the group level
$$
G = KAN
$$
where $A$ and $N$ are the analytic subgroups of $G$ with Lie algebras $\mf a$ and $\mf n$, respectively. The group $A$ is abelian and simply connected, and $N$ is nilpotent and simply connected.

Let $B$ be the Killing form of $\mf g$. Let $\mf v = \mf g_\alpha$, $\mf z = \mf g_{2\alpha}$ and equip the Lie algebra $\mf n = \mf v \oplus \mf z$ with the inner product
$$
\la v+z,v'+z'\ra = \frac{-1}{2p+4q} B\left( \frac v2 + \frac z4, \theta\left( \frac{v'}{2} + \frac{z'}{4} \right)\right)
$$
$v,v'\in\mf v$, $z,z'\in\mf z$ where, as in \cite{MR996553},
$$
2p = \dim\mf v,
\quad
q = \dim\mf z.
$$
The inner product on $\mf n$ of course gives rise to a norm $|\ \ |$ on $\mf n$ defined by $|n|~=~\sqrt{\la n,n\ra}$, $n\in\mf n$.

The following convenient notation is taken from \cite{MR996553}. Let
\begin{align}\label{eq:vz}
(v,z) = \exp(v + z/4), \quad v\in\mf v, \ z\in\mf z.
\end{align}
Then $(v,z)\in N$. Since $N$ is connected, nilpotent and simply connected, the exponential mapping is a diffeomorphism of $\mf n$ onto $N$ (\cite[Theorem~1.127]{MR1920389}), and hence every element of $N$ can in a unique way be written in the form \eqref{eq:vz}.

We let $a = p/2$. It is well known that the values of $p$, $q$, and $a$ are as follows:
\renewcommand{\arraystretch}{1.2}
\setlength{\tabcolsep}{10pt}
\begin{align}\label{eq:constants}
\begin{tabular}{@{}lccc@{}}
\hline
Group & $p$ & $q$ & $a$ \\\hline
$\Sp(1,n)$ & $2n-2$ & $3$ & $n-1$ \\ 
$\FF$ & $4$ & $7$ & $2$ \\\hline
\end{tabular}
\end{align}

As $\mf a$ is one-dimensional there is a unique element $H$ in $\mf a$ such that $\mr{ad}(H)|_{\mf g_\alpha} = \id_{\mf g_\alpha}$. Let
$$
a_t = \exp(t H) \in A, \quad t\in\R,
$$
and $\overline{A^+} = \overline{\{ \exp tH \mid t > 0\}} = \{a_t\in A \mid t\geq 0\}$. Then we have the $KAK$ decomposition of $G$ (see \cite[Theorem~IX.1.1]{MR514561})
\begin{align}\label{eq:KAK}
G = K\overline{A^+}K.
\end{align}

More precisely, for each $g\in G$ there is a unique $t\geq 0$ such that $g\in Ka_tK$. Concerning the $KAK$ decomposition of elements of $N$ we can be even more specific. The following lemma is completely analogous to part of \cite[Proposition~2.1]{MR996553}, and thus we leave out the proof.
\begin{lem}\label{lem:KAK}
For every $v\in\mf v$ and $z\in\mf z$ exists a unique $t\in\R_+$ such that
$$
(v,z) \in K a_t K.
$$
Moreover, $t$ satisfies
$$
4\sinh^2 t = 4|v|^2 + |v|^4 + |z|^2.
$$
\end{lem}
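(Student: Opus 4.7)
The plan is to follow the scheme of \cite[Proposition~2.1]{MR996553}. Existence and uniqueness of $t \in \R_+$ with $(v,z) \in K a_t K$ are immediate from the $KAK$ decomposition~\eqref{eq:KAK}, so only the formula $4 \sinh^2 t = 4|v|^2 + |v|^4 + |z|^2$ requires work.

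The central idea is to invoke the Cartan involution $\theta$. Since $\theta|_K = \mr{id}$ and $\theta(a_s) = a_{-s}$, any decomposition $g = k_1 a_t k_2$ with $k_1, k_2 \in K$ gives
$$
\theta(g^{-1})\, g \;=\; k_2^{-1}\, a_{2t}\, k_2,
$$
so $\theta(g^{-1}) g$ is $K$-conjugate to $a_{2t}$. Thus $t$ can be read off from any $K$-conjugation invariant of $\theta(g^{-1}) g$. A natural choice is the trace of $\mr{Ad}$ (or a similar character), using that $\mr{Ad}(a_{2t})$ acts by $e^{\pm 2t}$ on $\mf g_{\pm\alpha}$ and by $e^{\pm 4t}$ on $\mf g_{\pm 2\alpha}$.

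Applying this with $g = (v,z) = \exp(v + z/4)$, I would compute $\theta(g^{-1}) g$ via the Baker--Campbell--Hausdorff formula. Since $\theta(\mf g_\beta) = \mf g_{-\beta}$, the factor $\theta(g^{-1})$ lies in $\exp(\mf g_{-\alpha} \oplus \mf g_{-2\alpha})$, so only the brackets among the four root spaces $\mf g_{\pm\alpha}, \mf g_{\pm 2\alpha}$ together with $\mf m \oplus \mf a$ contribute. The restricted-root structure forces the BCH series to terminate after a short number of terms, and the resulting expression is a polynomial in $|v|^2$ and $|z|^2$ whose coefficients are dictated by the normalization of the inner product on $\mf n$ via the Killing form $B$. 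Matching with the $\mr{Ad}$-trace of $a_{2t}$ and applying the identity $2\cosh(2s) - 2 = 4 \sinh^2 s$ then yields the stated formula.

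The main obstacle is the combinatorial bookkeeping in the BCH computation together with the verification that the normalization constant $(2p+4q)^{-1}$ in the inner product on $\mf n$ produces precisely the coefficients $4, 1, 1$ on the right-hand side. This is the reason for that particular choice of normalization in \cite{MR996553}, and it is also what allows the argument to apply uniformly to both $\Sp(1,n)$ and $\FF$ without a separate case analysis; the alternative route through an explicit matrix realization of $N$ on the defining representation $\mb H^{n+1}$ (respectively the $27$-dimensional exceptional Jordan algebra for $\FF$) is available but requires case-by-case computation.
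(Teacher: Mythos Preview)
The paper omits the proof entirely, pointing to \cite[Proposition~2.1]{MR996553}, so your plan to follow that reference is exactly what the paper does.

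Your reduction $\theta(g^{-1})g = k_2^{-1}a_{2t}k_2$ is correct, but the proposed execution has a genuine gap. You assert that ``the restricted-root structure forces the BCH series to terminate after a short number of terms'' for $\theta(g^{-1})g = \exp(X)\exp(Y)$ with $X = -\theta(v+z/4) \in \bar{\mf n}$ and $Y = v+z/4 \in \mf n$. This is false: already $[X,Y]$ has a nonzero component in $\mf m \oplus \mf a$ (in particular in $\mf a$), and iterated brackets then cycle through $\mf a$, $\mf n$, $\bar{\mf n}$ indefinitely. The subalgebra generated by $\mf n$ and $\bar{\mf n}$ contains the semisimple element $H$ and is not nilpotent, so Baker--Campbell--Hausdorff does not truncate. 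What \emph{does} truncate is each factor $\mr{Ad}(\exp Y) = \exp(\mr{ad}\,Y)$ and $\mr{Ad}(\exp X) = \exp(\mr{ad}\,X)$ separately, since $\mr{ad}\,X$ and $\mr{ad}\,Y$ are nilpotent; one can then compute conjugation invariants of $\mr{Ad}(\theta(g^{-1})g)$ as a product of two polynomial matrices without ever taking a logarithm---but that is a different computation from the one you describe. A secondary issue: $\Tr\,\mr{Ad}(a_{2t}) = \dim(\mf m\oplus\mf a) + 4p\cosh 2t + 2q\cosh 4t$ mixes $\cosh 2t$ and $\cosh 4t$, so the trace alone determines $\cosh 2t$ only after solving a quadratic, not by the direct ``matching'' you suggest; a sharper invariant (or the full spectrum of $\mr{Ad}$) is needed to read off $4\sinh^2 t = 2\cosh 2t - 2$ cleanly.
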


The following fact proved by Whitney \cite[Theorem~1]{MR0007783} identifies the smooth even functions on $\R$ with smooth functions on $\R_+ = [0,\infty[$.

\begin{lem}[\cite{MR0007783}]\label{lem:whitney}
An even function $g$ on $\R$ is smooth if and only if it has the form $g(x) = f(x^2)$ for some (necessarily unique) $f\in C^\infty(\R_+)$.
\end{lem}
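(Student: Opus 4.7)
This is Whitney's theorem from \cite{MR0007783}; in principle one can simply cite it, but we sketch the approach. The backward direction is essentially a chain rule argument: given $f\in C^\infty(\R_+)$, extend it to $\tilde f \in C^\infty(\R)$ using Seeley's extension theorem (or a Borel-type construction), and observe that $g(x) = \tilde f(x^2)$ is then smooth on $\R$ and manifestly even. Uniqueness is immediate since $f(t) = g(\sqrt t)$ on $(0,\infty)$, and hence on $[0,\infty)$ by continuity.

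For the forward direction, set $f(t) = g(\sqrt t)$ for $t > 0$ and $f(0) = g(0)$; smoothness of $f$ away from the origin is clear, so the task is to verify smoothness at $t = 0$. The plan is to reduce to the flat case via Borel's theorem. Since $g$ is even, its Taylor series at $0$ contains only even powers, so one can produce $f_0 \in C^\infty(\R_+)$ whose Taylor expansion at $0$ is $\sum_{k\geq 0} \frac{g^{(2k)}(0)}{(2k)!} t^k$. Then $h(x) := g(x) - f_0(x^2)$ is smooth, even, and flat at $0$, and it suffices to show that $\psi(t) := h(\sqrt t)$ extends to a smooth function on $[0,\infty)$ with all derivatives vanishing at $0$; the required $f$ is then $f_0 + \psi$.

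For this last step, repeated application of the chain rule expresses $\psi^{(k)}(t)$ (for $t > 0$) as a finite linear combination of terms $c_{k,j}\, h^{(j)}(\sqrt t)\, t^{-(k - j/2)}$ with $1 \leq j \leq k$. Flatness of $h$ gives $h^{(j)}(x) = o(|x|^N)$ for every $N$, so each such term tends to $0$ as $t \to 0^+$, and a standard mean value argument then yields $\psi^{(k)}(0) = 0$ inductively. The main obstacle is precisely this analysis at the origin: the chain rule naively produces singular factors $t^{-k+j/2}$, and it is the even parity of $g$ (killing the odd Taylor coefficients so that $f_0$ can absorb the non-flat part) together with the flatness of the remainder $h$ that makes these singularities harmless.
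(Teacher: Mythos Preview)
Your sketch is correct, but note that the paper does not actually prove this lemma: it is stated with attribution to Whitney \cite[Theorem~1]{MR0007783} and no argument is given. So there is nothing to compare against in the paper itself.

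That said, your outline is a valid route to the result. The reduction to the flat case via Borel's theorem is standard, and your handling of the flat remainder is sound: the Fa\`a di Bruno--type expansion of $\psi^{(k)}(t)$ indeed produces only terms of the form $c_{k,j}\, h^{(j)}(\sqrt t)\, t^{-(k-j/2)}$, and flatness of $h$ kills the apparent singularities. The inductive mean value argument to upgrade ``$\psi^{(k)}(t)\to 0$ as $t\to 0^+$'' to ``$\psi^{(k)}(0)$ exists and equals $0$'' is exactly the right way to finish. One minor anachronism worth noting: Seeley's extension theorem postdates Whitney's 1943 paper, so Whitney's original argument cannot have used it; however, for the backward direction here the extension step is elementary (Borel's theorem, or even the explicit chain rule computation, suffices), so this is not a genuine gap.
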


The following proposition is inspired by Theorem 2.5(b) in \cite{MR996553}.
\begin{prop}\label{prop:K-bi-inv}
Suppose $u\in C(N)$. Then $u$ is the restriction to $N$ of a $K$-bi-invariant function on $G$ if and only if $u$ is of the form
\begin{align}\label{eq:u-form}
(v,z) \mapsto f(4|v|^2 + |v|^4 + |z|^2)
\end{align}
for some $f\in C(\R_+)$. In that case, the function $f$ is uniquely determined by $u$.

The function $f$ is in $C^\infty(\R_+)$, $C_c(\R_+)$, or $C_0(\R_+)$ if and only if $u$ is in $C^\infty(N)$, $C_c(N)$, or $C_0(N)$, respectively.
\end{prop}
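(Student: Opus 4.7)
The approach is to use the $KAK$ decomposition \eqref{eq:KAK} together with Lemma~\ref{lem:KAK} to reduce $K$-bi-invariance on $G$ to dependence on a single real parameter $t\geq 0$. For each $g\in G$ let $t(g)\geq 0$ denote the unique parameter with $g \in K a_{t(g)}K$. The map $t\colon G \to \R_+$ is continuous and $K$-bi-invariant, and Lemma~\ref{lem:KAK} gives the explicit formula $4\sinh^2 t(v,z) = 4|v|^2 + |v|^4 + |z|^2$ on $N$. Since $t \mapsto 4\sinh^2 t$ is a homeomorphism of $\R_+$ onto itself, continuous functions of $t$ correspond bijectively to continuous functions of $s(v,z) := 4|v|^2+|v|^4+|z|^2$.

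For $(\Leftarrow)$, given $f\in C(\R_+)$, set $\tilde u(g) := f(4\sinh^2 t(g))$; this is a continuous $K$-bi-invariant function on $G$ whose restriction to $N$ equals $u$. For $(\Rightarrow)$, if $u = U|_N$ with $U\in C(G)$ being $K$-bi-invariant, then $U(v,z) = U(a_{t(v,z)})$. Putting $\phi(t):= U(a_t) \in C(\R_+)$ and $f(s):= \phi(\sinh^{-1}(\sqrt{s}/2))\in C(\R_+)$ gives $u(v,z) = f(4|v|^2+|v|^4+|z|^2)$. Uniqueness of $f$ is immediate because the map $s\colon N \to \R_+$ is surjective, e.g.\ by fixing $v=0$ and letting $|z|$ vary over $\R_+$.

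For the refined statements, set $\Phi(v,z) := 4|v|^2+|v|^4+|z|^2$. In the exponential coordinates \eqref{eq:vz}, $\Phi$ is a polynomial in the Euclidean coordinates on $\mf n \simeq N$, and it is a proper map $N\to \R_+$. Hence $u = f\circ\Phi$ inherits smoothness, compact support, or vanishing at infinity from $f$, and properness of $\Phi$ gives the converses for $C_c$ and $C_0$. The main obstacle is the converse smoothness statement $u \in C^\infty(N) \Rightarrow f \in C^\infty(\R_+)$: pick any unit vector $e\in \mf z$ (which exists since $\dim \mf z = q \geq 3$ by \eqref{eq:constants}), and restrict $u$ along the smooth line $r \mapsto (0, re)$ to obtain the smooth even function $r\mapsto u(0, re) = f(r^2)$. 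Whitney's Lemma (Lemma~\ref{lem:whitney}) then ensures that this even smooth function arises from a smooth function on $\R_+$, and by the uniqueness established above that function must be $f$; hence $f\in C^\infty(\R_+)$.
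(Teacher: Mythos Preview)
Your proof is correct and follows essentially the same route as the paper's: both use Lemma~\ref{lem:KAK} to see that a $K$-bi-invariant extension forces $u(v,z)$ to depend only on $4|v|^2+|v|^4+|z|^2$, both build the extension $\tilde u$ by setting $\tilde u(ka_tk')=f(4\sinh^2 t)$, and both establish smoothness of $f$ by restricting along a line $r\mapsto(0,re)$ in $\mf z$ and invoking Whitney's Lemma~\ref{lem:whitney}. The only cosmetic differences are that you phrase the $C_c$/$C_0$ equivalences via properness of $\Phi$ and assert continuity of $g\mapsto t(g)$ (which is true but not actually needed, since the proposition does not require the $K$-bi-invariant extension to be continuous).
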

\begin{proof}
Assume $u\in C(N)$ is the restriction to $N$ of a $K$-bi-invariant function on $G$. Then by Lemma~\ref{lem:KAK}, $u(v,z)$ only depends on $4|v|^2 + |v|^4 + |z|^2$ when $v\in\mf v$, $z\in\mf z$. Hence there is a unique function $f$ on $\R_+$ such that
$$
u(v,z) = f(4|v|^2 + |v|^4 + |z|^2), \quad v\in\mf v,\ z\in\mf z.
$$
If we fix a unit vector $z_0\in\mf z$ then $t\mapsto u(0,\sqrt t z_0) = f(t)$ is continuous on $\R_+$, since $u$ is continuous. In other words, $f\in C(\R_+)$.

Assume conversely that $u$ is of the form \eqref{eq:u-form} for some (necessarily unique) $f\in C(\R_+)$. We define a function $\tilde u$ on $G$ using the $K\overline{A^+}K$ decomposition as follows. For an element $ka_t k'$ in $G$ where $k,k'\in K$ and $t\in\R_+$ we let
$$
\tilde u(ka_t k') = f(4\sinh^2 t).
$$
By the uniqueness of $t$ in the $K\overline{A^+}K$ decomposition, this is well-defined. Clearly, $\tilde u$ is a $K$-bi-invariant function on $G$. When $(v,z)\in N$ we find by Lemma~\ref{lem:KAK} that
$$
\tilde u(v,z) = f(4|v|^2 + |v|^4 + |z|^2) = u(v,z)
$$
so that $\tilde u$ restricts to $u$ on the subgroup $N$.

It is easy to see that $u$ has compact support if and only if $f$ has compact support, and similarly that $u$ vanishes at infinity if and only if $f$ vanishes at infinity. It is also clear that smoothness of $f$ implies smoothness of $u$.

Finally, assume that $u$ is smooth. If again $z_0\in\mf z$ is a unit vector, then $t\mapsto u(0,tz_0) = f(t^2)$ is a smooth even function on $\R$. By Lemma~\ref{lem:whitney} we obtain $f\in C^\infty(\R_+)$.
\end{proof}

We remark that $\|u\|_\infty = \|f\|_\infty$.

\begin{lem}
Let $(u_k)$ be a sequence, where $u_k\in C(N)$ is the restrictions to $N$ of a $K$-bi-invariant function in $C(G)$, and let $f_k\in C(\R_+)$ be as in Proposition~\ref{prop:K-bi-inv}. If $u_k\to 1$ pointwise, then $f_k\to 1$ pointwise.
\end{lem}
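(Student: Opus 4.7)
The plan is to exploit the explicit parametrization from Proposition~\ref{prop:K-bi-inv}: the relation $u_k(v,z)=f_k(4|v|^2+|v|^4+|z|^2)$ shows that evaluating $u_k$ at a single point of $N$ directly reads off the value of $f_k$ at a specific point of $\R_+$. So the whole argument reduces to choosing, for each $t\in\R_+$, a point $(v,z)\in N$ whose $KAK$-parameter is exactly $t$, and then invoking pointwise convergence at that point.

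Concretely, fix $t\in\R_+$. I would like to solve $4|v|^2+|v|^4+|z|^2=t$ for some pair $(v,z)\in\mf v\times\mf z$. The simplest choice is $v=0$, which reduces the equation to $|z|^2=t$. This is solvable because $\mf z=\mf g_{2\alpha}$ is nonzero: from the table \eqref{eq:constants} we have $q=\dim\mf z\geq 3$, so we can pick a unit vector $z_0\in\mf z$ and set $z=\sqrt{t}\,z_0$. Then $(0,z)\in N$ and
$$
f_k(t)=f_k\bigl(4|0|^2+|0|^4+|z|^2\bigr)=u_k(0,z).
$$
Since $u_k\to1$ pointwise on $N$, the right-hand side converges to $1$ as $k\to\infty$, which gives $f_k(t)\to1$. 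As $t\in\R_+$ was arbitrary, this is the desired pointwise convergence.

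There is essentially no obstacle: the only thing one has to notice is that the map $(v,z)\mapsto 4|v|^2+|v|^4+|z|^2$ is surjective onto $\R_+$, which is immediate because even the restriction $z\mapsto|z|^2$ (with $v=0$) already hits every value in $\R_+$. No uniformity of the convergence is needed since the statement only claims pointwise convergence of $f_k$.
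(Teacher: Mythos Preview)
Your proof is correct and follows exactly the paper's approach: the paper simply notes that the result is obvious because $(v,z)\mapsto 4|v|^2+|v|^4+|z|^2$ is a surjection of $N$ onto $\R_+$, and you supply the explicit preimage $(0,\sqrt{t}\,z_0)$ that makes this surjectivity evident.
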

\begin{proof}
This is obvious, since the map $(v,z)\mapsto 4|v|^2 + |v|^4 + |z|^2$ is a surjection of $N$ onto $\R_+$.
\end{proof}

\subsection{Proof of Theorem~\ref{thm:rank-one}}

With almost all the notational preparations in place, we are now ready to aim for the proof of Theorem~\ref{thm:rank-one}. The starting point is the inequality in Proposition~\ref{prop:CH-1} which is taken almost directly from \cite{MR996553}. To ease notation a bit, let
\begin{align}\label{eq:C}
C= \frac{2^{p+1}\Gamma\left(\frac{p+q}{2}\right)}{\Gamma(p)\Gamma\left(\frac{q}{2}\right)}.
\end{align}
We remark that with this definition $C$ and \eqref{eq:WA} and \eqref{eq:constants} in mind, then
\begin{align}\label{eq:Lambda}
\frac C4 \Gamma(a) = \Lambda_\WA(G).
\end{align}
Combining Theorem~2.5(b), Proposition~5.1, and Proposition~5.2 in \cite{MR996553} one obtains the following proposition.
\begin{prop}[\cite{MR996553}]\label{prop:CH-1}
If $u\in C^\infty_c(N)$ is the restriction of a $K$-bi-invariant function on $G$, then $u$ is of the form
$$
u(z,v) = f(4|v|^2 + |v|^4 + |z|^2)
$$
for some $f\in C_c^\infty(\R)$, and
$$
\left| C \int_{\R_+} f^{(a)}(4t^2 + t^4)t^{2p-1} \ dt \right| \leq \|u\|_{A(N)}.
$$
\end{prop}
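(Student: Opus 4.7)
The first statement is essentially a direct consequence of Proposition~\ref{prop:K-bi-inv}. Since $u \in C^\infty_c(N)$ is $K$-bi-invariant (as the restriction of a $K$-bi-invariant function on $G$), the proposition yields a unique $f \in C^\infty_c(\R_+)$ with $u(v,z) = f(4|v|^2 + |v|^4 + |z|^2)$. Smoothness of $u$ at the origin, combined with the fact that $t \mapsto 4t + t^2$ is a local diffeomorphism of $[0,\infty)$ at $0$, allows one to extend $f$ to an element of $C^\infty_c(\R)$.

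For the inequality, the plan is to exhibit a single continuous linear functional $\Lambda \in A(N)^*$ of norm at most one such that
$$
\Lambda(u) = C\int_{\R_+} f^{(a)}(4t^2 + t^4)\, t^{2p-1}\, dt
$$
for every $K$-bi-invariant $u \in A(N)$ of the form above. Since $A(N)^* \cong L(N)$, this amounts to producing an explicit element of the group von Neumann algebra $L(N)$ with operator norm at most $1$, whose pairing with the radial $u$ gives the displayed integral. The construction and the two pieces of verification needed are precisely the three results of \cite{MR996553} cited in the hypothesis: Theorem~2.5(b) records the radial form of $K$-bi-invariant smooth compactly supported functions on $G$ (parallel to our Proposition~\ref{prop:K-bi-inv}); Proposition~5.1 computes, via the $KAK$ integration formula and Lemma~\ref{lem:KAK}, the pairing of the particular distribution against $u$ and identifies it with the one-dimensional integral $C\int_{\R_+} f^{(a)}(4t^2+t^4)\,t^{2p-1}\,dt$, where the $a$ derivatives of $f$ arise from differentiating $a$ times in the spectral parameter of the underlying family of spherical functions on $G$; and Proposition~5.2 establishes the crucial operator-norm bound.

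The main obstacle is the estimate embodied in Proposition~5.2 of \cite{MR996553}: the verification that the distribution does have $L(N)$-norm at most $1$ once it is normalized by the explicit constant $C = 2^{p+1}\Gamma((p+q)/2)/(\Gamma(p)\Gamma(q/2))$. This rests on the rank-one spherical analysis of $G$ and on the specific arithmetic of $p$, $q$, $a$ for $\Sp(1,n)$ and $\FF$ recorded in \eqref{eq:constants}; it is this estimate that is sharp and pins down the Cowling--Haagerup constant via the identity $\tfrac{C}{4}\Gamma(a) = \Lambda_\WA(G)$ from \eqref{eq:Lambda}. Once Proposition~5.1 and Proposition~5.2 are in hand, the desired inequality is just the bound $|\Lambda(u)| \leq \|\Lambda\|_{A(N)^*}\,\|u\|_{A(N)} \leq \|u\|_{A(N)}$.
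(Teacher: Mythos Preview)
Your proposal is correct and takes essentially the same approach as the paper: the paper does not give an independent proof but simply records that the proposition follows by combining Theorem~2.5(b), Proposition~5.1, and Proposition~5.2 of \cite{MR996553}, and your sketch unpacks precisely how those three ingredients fit together (the radial form, the identification of the pairing with the one-dimensional integral, and the norm-one bound on the functional in $L(N)$).
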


We now aim to prove a variation of the above proposition where we no longer require the function $u$ to be compactly supported.

Following \cite{MR996553}, we let $h\colon]0,\infty[ \to \R$ be defined by $h(s) = (s^{1/2} - 2)^{p-1} s^{-1/2}$, and let $g$ be the $(a-1)$'th derivative of $h$. It is known (see \cite[p.~544]{MR996553}) that
\begin{align}\label{eq:g}
\int_4^\infty |g'(s)| \ ds < \infty \quad\text{and}\quad \int_4^\infty g'(s) \ ds = \Gamma(a).
\end{align}

\begin{prop}\label{prop:norm-est}
If $u\in A(N) \cap C^\infty(N)$ is the restriction of a $K$-bi-invariant function on $G$, then $u$ is of the form
$$
u(z,v) = f(4|v|^2 + |v|^4 + |z|^2)
$$
for some $f\in C_0^\infty(\R_+)$, and
$$
\left| \frac C4 \int_4^\infty f(s-4)g'(s) \ ds \right| \leq \|u\|_{A(N)}.
$$
\end{prop}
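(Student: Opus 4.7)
The plan is to first establish the inequality for $u \in C_c^\infty(N)$ by transforming Proposition~\ref{prop:CH-1} through a change of variable followed by $a$ successive integrations by parts, and then to pass to the general case by approximating $u$ with compactly supported $K$-bi-invariant restrictions.

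\emph{Compactly supported case.} For $u \in C_c^\infty(N)$ a $K$-bi-invariant restriction with associated $f \in C_c^\infty(\R_+)$ from Proposition~\ref{prop:K-bi-inv}, substitute $s = (t^2+2)^2$ in the integral of Proposition~\ref{prop:CH-1}. Then $s^{1/2}-2 = t^2$, $ds = 4t(t^2+2)\,dt$, and hence $h(s)\,ds = 4 t^{2p-1}\,dt$, giving
$$
C\int_{\R_+} f^{(a)}(4t^2+t^4)\,t^{2p-1}\,dt \;=\; \frac{C}{4}\int_4^\infty f^{(a)}(s-4)\,h(s)\,ds.
$$
Integrate by parts $a$ times, transferring all derivatives onto $h$. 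The boundary terms at $s=\infty$ vanish because $f$ has compact support, and those at $s=4$ vanish because $h(s)$ has a zero of order $p-1 \geq a$ at $s=4$ (as $p = 2a$ by \eqref{eq:constants}), so that $h^{(k)}(4) = 0$ for $k=0,\ldots,a-1$, and in particular $g(4) = h^{(a-1)}(4) = 0$. Collecting signs, the right-hand side becomes $(-1)^a\frac{C}{4}\int_4^\infty f(s-4)g'(s)\,ds$, and Proposition~\ref{prop:CH-1} yields the desired estimate when $u$ is compactly supported.

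\emph{Extension by approximation.} For $u \in A(N)\cap C^\infty(N)$ with associated $f \in C_0^\infty(\R_+)$, pick $\phi_k \in C_c^\infty(\R_+)$ with $0\leq\phi_k\leq 1$, $\phi_k \equiv 1$ on $[0,k]$, and set
$$
u_k(v,z) = u(v,z)\,\phi_k\bigl(4|v|^2+|v|^4+|z|^2\bigr).
$$
By Proposition~\ref{prop:K-bi-inv} each $u_k$ lies in $C_c^\infty(N)$ and is a $K$-bi-invariant restriction; since $A(N)$ is a Banach algebra containing $C_c^\infty(N)$, it belongs to $A(N)$. Applying the compactly supported case to $u_k$ gives
$$
\left|\frac{C}{4}\int_4^\infty f(s-4)\,\phi_k(s-4)\,g'(s)\,ds\right| \;\leq\; \|u_k\|_{A(N)}.
$$
Since $f$ is bounded and $g' \in L^1([4,\infty[)$ by \eqref{eq:g}, dominated convergence sends the left-hand side to $\bigl|\frac{C}{4}\int_4^\infty f(s-4)g'(s)\,ds\bigr|$ as $k\to\infty$.

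\emph{Main obstacle.} The remaining step is to arrange $\limsup_k \|u_k\|_{A(N)} \leq \|u\|_{A(N)}$. The natural route is to choose the radial cutoffs $(v,z)\mapsto\phi_k(4|v|^2+|v|^4+|z|^2)$ so that they form a bounded family in $B_2(N)$ with Herz--Schur norm at most $1$; the multiplier inequality then gives $\|u_k\|_{A(N)} \leq \|u\|_{A(N)}$. Since $N$ is nilpotent and hence amenable, $B_2(N) = B(N)$ isometrically, so it suffices to realize the $\phi_k(4|v|^2+|v|^4+|z|^2)$ as positive definite, compactly supported $K$-bi-invariant-restriction functions on $N$ with value $1$ at the identity and converging to $1$ uniformly on compacts. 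Constructing such a $K$-bi-invariant approximate identity for $A(N)$ is the principal technical step; it can be carried out using the amenability of $N$ together with the F\o lner-type behaviour of the sublevel sets $\{(v,z):4|v|^2+|v|^4+|z|^2 \leq R\}$, and, if needed, the identity $A(P) = B(P)\cap C_0(P)$ from Theorem~\ref{thm:fourier-P} to transfer approximate units from the amenable parabolic $P \supseteq N$. Once this is in place, passing to the limit $k \to \infty$ completes the proof.
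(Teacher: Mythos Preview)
Your treatment of the compactly supported case is correct and coincides with the paper's computation: the substitution $s=(t^2+2)^2$ and the $a$-fold integration by parts are exactly how the paper passes from Proposition~\ref{prop:CH-1} to the form involving $g'$.

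The gap is in the approximation step. You want the radial cutoffs $\psi_k(v,z)=\phi_k(4|v|^2+|v|^4+|z|^2)$ to satisfy $\|\psi_k\|_{B_2(N)}\le 1$, and you propose to achieve this by making them positive definite on $N$. But positive definiteness on the Heisenberg-type group $N$ is governed by the representation theory of $N$ and has no a priori connection with the variable $4|v|^2+|v|^4+|z|^2$, which is an artifact of the $KAK$ decomposition of the ambient group $G$. Amenability of $N$ does produce a compactly supported positive-definite approximate identity, but not one of this special shape; and since $K$ does not act on $N$ by group automorphisms, there is no averaging procedure \emph{on $N$} that forces functions into this form while preserving positive definiteness. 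The appeal to Theorem~\ref{thm:fourier-P} does not help for the same reason. So the assertion that ``it can be carried out'' is unsupported, and without it you only get the inequality with an uncontrolled constant on the right.

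The paper closes the gap differently: it invokes the weak amenability of $G$ (Lemma~\ref{lem:WA}) to obtain $K$-bi-invariant $v_k\in C_c^\infty(G)$ with $\|v_k\|_{B_2}\le\Lambda_{\WA}(G)$ and $\|v_k w-w\|_{A(G)}\to 0$ for all $w\in A(G)$. Restricting to $N$ (Lemma~\ref{lem:A-extension}) and setting $u_k=(v_k|_N)\,u$ gives compactly supported smooth $K$-bi-invariant restrictions with $u_k\to u$ in $A(N)$-\emph{norm}, so that $\|u_k\|_{A(N)}\to\|u\|_{A(N)}$ exactly, with no need for the cutoffs to have multiplier norm at most $1$. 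The bound $\|v_k\|_{B_2}\le\Lambda_{\WA}(G)$ enters only to guarantee $\sup_k\|f_k\|_\infty<\infty$ for the dominated-convergence step. The idea you are missing is to aim for norm convergence $u_k\to u$ in $A(N)$ rather than a pointwise multiplier bound, and that the Cowling--Haagerup weak amenability of $G$ is precisely what supplies the required $K$-bi-invariant approximate identity.
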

\begin{proof}
We use the fact that $G$ is weakly amenable \cite{MR996553}. We will then approximate $u$ by functions in $C^\infty_c(N)$ and apply Proposition~\ref{prop:CH-1} to those functions.

Choose a sequence $v_k \in C^\infty_c(G)$ of $K$-bi-invariant functions such that $\|v_k\|_{B_2} \leq \Lambda_\WA(G)$ and
$$
\| v_k v - v \|_{A(G)} \to 0 \quad \text{ for every } v\in A(G)
$$
(see Lemma~\ref{lem:WA}). Put $w_k = v_k|_N$. Then by Lemma~\ref{lem:A-extension} we have
$$
\| w_k v - v \|_{A(N)} \to 0 \quad \text{ for every } v\in A(N).
$$
If we put $u_k = w_k u$, then we get $ u_k\to u$ uniformly. Note that $u_k \in C^\infty_c(N)$. Let $f\in C^\infty_0(\R_+)$ and $f_k \in C^\infty_c(\R_+)$ be chosen according to Proposition~\ref{prop:K-bi-inv} such that
$$
u(v,z) = f(4|v|^2 + |v|^4 + |z|^2), \qquad u_k (v,z) = f_k(4|v|^2 + |v|^4 + |z|^2).
$$
Using the substitution $s = 4 + 4t^2 + t^4$ and then partial integration we get
\begin{align*}
\| u \|_{A(N)}
& = \lim_k \| u_k\|_{A(N)} \\
&\geq \lim_k \left| C \int_{\R_+} f_k^{(a)}(4t^2 + t^4)t^{2p-1} \ dt \right|  \\
&= \lim_k \left| \frac C4 \int_4^\infty f_k^{(a)}(s-4) h(s) \ ds \right|  \\
&= \lim_k \left| \frac C4 \int_4^\infty f_k(s-4)g'(s) \ ds \right|.
\end{align*}
There are no boundary terms, since $f_k$ has compact support, and because the first $p-2$ derivatives of $h$ vanish at $s = 4$. We observe that
$$
\|f_k\|_\infty = \|u_k\|_\infty \leq \|u_k\|_{B_2} \leq \|w_k\|_{B_2} \|u\|_{B_2} \leq \|v_k\|_{B_2} \|u\|_{B_2} \leq \Lambda_\WA(G) \ \|u\|_{B_2},
$$
so in particular, $\sup_k \|f_k\|_\infty < \infty$. Finally, since $g'(s)$ is integrable (see \eqref{eq:g}), we can apply Lebesgue's Dominated Convergence Theorem and get
$$
\| u \|_{A(N)} \geq \left| \frac C4 \int_4^\infty f(s-4)g'(s) \ ds \right|.
$$
\end{proof}

\begin{prop}\label{prop:est3}
Suppose $u_k\in A(N) \cap C^\infty(N)$ is the restriction of a $K$-bi-invariant function on $G$, and suppose further that $u_k\to 1$ pointwise as $k\to\infty$. Then
$$
\sup_k \|u_k\|_{A(N)} \geq \Lambda_\WA(G).
$$
\end{prop}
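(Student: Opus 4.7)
The plan is to combine the integral inequality from Proposition~\ref{prop:norm-est} with pointwise convergence $f_k \to 1$ and an application of Lebesgue's Dominated Convergence Theorem, using the integrability of $g'$ from \eqref{eq:g}.

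First, I would reduce to the case where $M := \sup_k \|u_k\|_{A(N)} < \infty$, since otherwise the conclusion is automatic. Under this assumption, Proposition~\ref{prop:K-bi-inv} yields functions $f_k \in C_0^\infty(\R_+)$ with $u_k(v,z) = f_k(4|v|^2 + |v|^4 + |z|^2)$, and the lemma preceding Proposition~\ref{prop:CH-1} (on pointwise convergence) gives $f_k \to 1$ pointwise on $\R_+$. Moreover, since the uniform norm is dominated by the Fourier norm, we have the uniform bound
\[
\|f_k\|_\infty = \|u_k\|_\infty \leq \|u_k\|_{A(N)} \leq M \quad \text{for all } k.
\]

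Next, I would apply Proposition~\ref{prop:norm-est} to each $u_k$ to obtain
\[
\|u_k\|_{A(N)} \geq \left| \frac{C}{4} \int_4^\infty f_k(s-4) g'(s) \, ds \right|.
\]
The integrand is dominated by $M|g'(s)|$, which is integrable on $[4,\infty[$ by \eqref{eq:g}, and $f_k(s-4) g'(s) \to g'(s)$ pointwise. By Lebesgue's Dominated Convergence Theorem together with the second identity in \eqref{eq:g},
\[
\lim_k \int_4^\infty f_k(s-4) g'(s) \, ds = \int_4^\infty g'(s) \, ds = \Gamma(a).
\]
Hence
\[
\sup_k \|u_k\|_{A(N)} \geq \frac{C}{4}\, \Gamma(a) = \Lambda_\WA(G)
\]
by \eqref{eq:Lambda}, which is the desired conclusion.

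The argument is essentially a routine limiting procedure; the only potentially delicate point is checking that the dominating hypothesis is available, and this is handled by the initial reduction to finite $M$ together with the inequality $\|f_k\|_\infty \leq \|u_k\|_{A(N)}$. All the real content has already been packaged into Proposition~\ref{prop:norm-est} and the integrability statement \eqref{eq:g}, so no further obstacles are expected.
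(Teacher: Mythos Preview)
Your proposal is correct and follows essentially the same route as the paper's own proof: reduce to $\sup_k\|u_k\|_{A(N)}<\infty$, pass to the functions $f_k$ via Proposition~\ref{prop:K-bi-inv}, apply the integral estimate of Proposition~\ref{prop:norm-est}, and use dominated convergence with the integrability of $g'$ from \eqref{eq:g} together with \eqref{eq:Lambda}. Your write-up is in fact slightly more explicit than the paper's in justifying the uniform bound $\|f_k\|_\infty\leq M$.
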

\begin{proof}
If $\sup_k \| u_k\|_{A(N)} = \infty$, there is nothing to prove. So we assume that $\sup_k \| u_k\|_{A(N)} < \infty$.

Let $f_k \in C^\infty_0(\R_+)$ be chosen according to Proposition~\ref{prop:K-bi-inv} such that
$$
u_k (v,z) = f_k(4|v|^2 + |v|^4 + |z|^2).
$$
Observe that $f_k \to 1$ pointwise, and $\sup_k \|f_k\|_\infty < \infty$. By Lebesgue's Dominated Convergence Theorem we have
\begin{align*}
\sup_k \| u_k \|_{A(N)}
\geq \lim_k \left| \frac C4 \int_4^\infty f_k(s-4)g'(s) \ ds \right|  
= \frac C4 \int_4^\infty g'(s) \ ds
= \frac C4 \ \Gamma(a).
\end{align*}
As mentioned in \eqref{eq:Lambda}, $\Lambda_\WA(G) = C\Gamma(a)/4$.
\end{proof}

Theorem~\ref{thm:rank-one} is an immediate consequence of the following, since we already know the value $\Lambda_\WA(G)$ and that $\Lambda_\WH(G) \leq \Lambda_\WA(G)$.

\begin{prop}\label{prop:rank-one-main-prop}
If $G$ is either $\Sp(1,n)$, $n\geq 2$, or $\FF$, then $\Lambda_\WH(G) = \Lambda_\WA(G)$.
\end{prop}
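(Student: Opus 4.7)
Since $\Lambda_\WH(G)\le\Lambda_\WA(G)$ is automatic, the plan is to establish the reverse inequality by feeding a weak Haagerup witness for $G$ into Proposition~\ref{prop:est3}. Concretely, from any net that realises $\Lambda_\WH(G)$ I will produce a sequence of smooth $K$-bi-invariant functions on $G$ whose restrictions to $N$ satisfy the hypotheses of that proposition, with $A(N)$-norms bounded by $\Lambda_\WH(G)+\epsilon$; applying Proposition~\ref{prop:est3} then forces $\Lambda_\WA(G)\le\Lambda_\WH(G)+\epsilon$.

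Fix $\epsilon>0$ and pick a net $(u_\alpha)$ in $B_2(G)\cap C_0(G)$ with $\|u_\alpha\|_{B_2}\le\Lambda_\WH(G)+\epsilon$ and $u_\alpha\to 1$ uniformly on compacts. Let $\phi\in C_c^\infty(G)$ be a smooth, $K$-bi-invariant probability density supported in a small neighbourhood of the identity, and set $v_\alpha=\phi\ast u_\alpha\ast\phi$. Since left and right translations preserve the Herz--Schur norm, a Bochner-integral argument shows that convolution with a probability density on either side contracts the $B_2$-norm, so $\|v_\alpha\|_{B_2(G)}\le\|u_\alpha\|_{B_2(G)}$. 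Smoothness of $\phi$, together with unimodularity of $G$, makes $v_\alpha$ smooth and $K$-bi-invariant; $v_\alpha$ remains in $C_0(G)$ because $\phi\in L^1(G)$; and the compact support of $\phi$ guarantees $v_\alpha\to 1$ uniformly on compacts. Let $P\supseteq N$ be the amenable closed subgroup furnished by Theorem~\ref{thm:fourier-P}. Restriction from $G$ to $P$ contracts the Herz--Schur norm, amenability of $P$ yields $B_2(P)=B(P)$ isometrically, and Theorem~\ref{thm:fourier-P} identifies $B(P)\cap C_0(P)$ isometrically with $A(P)$. Hence $v_\alpha|_P\in A(P)$ with $\|v_\alpha|_P\|_{A(P)}\le\Lambda_\WH(G)+\epsilon$, and applying Lemma~\ref{lem:A-extension} to the closed subgroup $N\subseteq P$ gives $v_\alpha|_N\in A(N)\cap C^\infty(N)$ with $\|v_\alpha|_N\|_{A(N)}\le\Lambda_\WH(G)+\epsilon$.

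Each $v_\alpha|_N$ is the restriction to $N$ of the smooth $K$-bi-invariant function $v_\alpha$ on $G$, and $v_\alpha|_N\to 1$ pointwise (in fact uniformly on compacts). Since $N$ is $\sigma$-compact I extract a cofinal sequence $(\alpha_k)$ with $v_{\alpha_k}|_N\to 1$ pointwise on $N$. Proposition~\ref{prop:est3} applied to $(v_{\alpha_k}|_N)$ then yields $\Lambda_\WA(G)\le\sup_k\|v_{\alpha_k}|_N\|_{A(N)}\le\Lambda_\WH(G)+\epsilon$, and letting $\epsilon\to 0$ finishes the proof. The main obstacle is the chain of norm-contracting restrictions $B_2(G)\cap C_0(G)\to B_2(P)\cap C_0(P)=A(P)\to A(N)$; the critical step is the isometric identification $A(P)=B(P)\cap C_0(P)$, which is exactly the content of Theorem~\ref{thm:fourier-P} and is the ingredient borrowed from \cite{K-fourier} without which the argument would collapse, for it is precisely the $A(N)$-norm that Proposition~\ref{prop:est3} requires.
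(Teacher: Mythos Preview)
Your proof is correct and follows essentially the same route as the paper's: restrict a $C_0$ Herz--Schur witness to the amenable subgroup $P$, use $B_2(P)=B(P)$ and Theorem~\ref{thm:fourier-P} to land in $A(P)$, restrict further to $N$, and invoke Proposition~\ref{prop:est3}. The only difference is cosmetic: the paper outsources the smoothing, $K$-bi-averaging, and passage to a sequence to \cite[Proposition~4.3]{K-WH}, whereas you do it by hand via convolution with a smooth $K$-bi-invariant probability density and a $\sigma$-compactness argument; and for the final norm bound the paper runs the chain $\|u_k\|_{A(N)}=\|u_k\|_{B(N)}=\|u_k\|_{B_2(N)}\le\|v_k\|_{B_2(G)}$ via amenability of $N$, while you go through $A(P)\to A(N)$ using Lemma~\ref{lem:A-extension}---both yield the same estimate.
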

\begin{proof}
We only prove $\Lambda_\WH(G) \geq \Lambda_\WA(G)$, since the other inequality holds trivially. Using Proposition~4.3 in \cite{K-WH}, it is enough to prove that if a sequence $v_k \in B_2(G) \cap C^\infty_0(G)$ consisting of $K$-bi-invariant functions satisfies
$$
\| v_k\|_{B_2} \leq L \quad \text{for all } k,
$$
$$
v_k \to 1 \quad\text{uniformly on compacts as } k\to\infty,
$$
then $L \geq \Lambda_\WA(G)$. So suppose such a sequence is given. Consider the subgroup $P$ from Theorem~\ref{thm:fourier-P}. Since $P$ is amenable, $B_2(P) = B(P)$ isometrically. Then
$$
v_k|_P \in B_2(P) \cap C_0(P) = B(P) \cap C_0(P) = A(P)
$$
by Theorem~\ref{thm:fourier-P}, and so $v_k|_N \in A(N)$. To ease notation, we let $u_k = v_k|_N$. Then (using amenability of $N$)
$$
\|u_k\|_{A(N)} = \|u_k\|_{B(N)} = \|u_k\|_{B_2(N)} \leq \| v_k\|_{B_2(G)}.
$$
Hence by Proposition~\ref{prop:est3} and the above inequalities we conclude
$$
\Lambda_\WA(G) \leq \sup_k \|u_k\|_{A(N)} \leq L.
$$
This shows that $\Lambda_\WA(G) \leq \Lambda_\WH(G)$, and the proof is complete.
\end{proof}

\begin{proof}[Proof of Theorem~\ref{thm:rank-one}]
Suppose $G$ is a connected simple Lie group. If the real rank of $G$ is zero, then $G$ is compact and $\Lambda_\WA(G) = \Lambda_\WH(G) = 1$. If the real rank of $G$ is at least two, then $\Lambda_\WA(G) = \infty$ by \cite{UH-preprint}, \cite{MR1418350}. By Theorem~\ref{thm:simple-Lie-groups} also $\Lambda_\WH(G) = \infty$.

Only the case when the real rank of $G$ equals one remains. Then $G$ is locally isomorphic to either $\SO_0(1,n)$, $\SU(1,n)$, $\Sp(1,n)$ where $n \geq 2$ or locally isomorphic to $\FF$ (see, for example, the list \cite[p.~426]{MR1920389} and \cite[Theorem~II.1.11]{MR514561}). If $G$ is locally isomorphic to $\SO_0(1,n)$ or $\SU(1,n)$ then by \eqref{eq:WA} we conclude that $\Lambda_\WA(G) = \Lambda_\WH(G) = 1$.

Finally, let $\tilde G$ be either $\Sp(1,n)$ or $\FF$ and suppose $G$ is locally isomorphic to $\tilde G$. If $KAN$ is the Iwasawa decomposition of $\tilde G$ then $K$ is $\Sp(n)\times\Sp(1)$ or $\Spin(9)$, respectively (see Section~4, Proposition~1 and Section~5, Theorem~1 in \cite{MR560851} for the latter). Here $\Spin(9)$ is the two-fold simply connected cover of $\SO(9)$. In any case, $K$ is simply connected and compact, so it follows that $\tilde G$ is simply connected with finite center (\cite[Theorem~6.31]{MR1920389}).

From Proposition~\ref{prop:rank-one-main-prop}, we get that $\Lambda_\WH(G) = \Lambda_\WA(G)$ if $G = \tilde G$. Otherwise $G$ is a quotient of $\tilde G$ by a finite central subgroup, and then it follows from \eqref{eq:WA}, Proposition~\ref{prop:rank-one-main-prop} and \cite[Proposition~5.4]{K-WH} that $\Lambda_\WH(G) = \Lambda_\WH(\tilde G) = \Lambda_\WA(G)$.
\end{proof}

\section{Another group without the weak Haagerup property}\label{sec:SL2}
Throughout this section, we let $G$ be the group $G = \R^2\rtimes\SL(2,\R)$. We show here that this group does not have the weak Haagerup property. In short, we prove $\Lambda_\WH(G) = \infty$. This generalizes a result from \cite{MR1245415} and \cite{UH-preprint}, where it is proved that $\Lambda_\WA(G) = \infty$.

We shall think of $G$ as a subgroup of $\SL(3,\R)$ in the following way:
\begin{align*}
G &= \left(\begin{array}{cc|c}
\multicolumn{2}{c|}{\multirow{2}{*}{$\SL(2,\R)$}} & \multirow{2}{*}{$\R^2$} \\
 & & \\
 \hline
\multicolumn{2}{c|}{0} & 1
\end{array}\right).
\end{align*}
We consider the compact group $K = \SO_2(\R)$ as a subgroup of $G$ using the inclusions
$$
\SO_2(\R) \subseteq \SL(2,\R) \subseteq \R^2\rtimes\SL(2,\R).
$$
We will make use of the following closed subgroups of $G$.
\begin{align}
N = \left\{\begin{pmatrix}
1 & x & z \\
0 & 1 & y \\
0 & 0 & 1
\end{pmatrix}
\middle\vert x,y,z\in\R
\right\},
\qquad
\label{eq:P}
P = \left\{\begin{pmatrix}
\lambda & x & z \\
0 & \lambda^{-1} & y \\
0 & 0 & 1
\end{pmatrix}
\middle\vert x,y,z\in\R,\ \lambda>0
\right\}.
\end{align}
The group $N$ is the Heisenberg group. The following is proved in \cite[Section~10]{MR1245415} (see also \cite[Lemma~A and Lemma~E]{UH-preprint}).

\begin{prop}[{\cite{MR1245415}}]\label{prop:HE}
Consider the Heisenberg group $N$. If $ u\in C^\infty_c(N)$ is the restriction of a $K$-bi-invariant function in $C^\infty(G)$, then
$$
\left| \int_{-\infty}^\infty \frac{ u(x,0,0)}{\sqrt{1 + x^2/4}}\ dx \right| \leq 12\pi \| u\|_{A(N)}.
$$
\end{prop}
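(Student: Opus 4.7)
The plan is to express the left-hand side as a pairing of $u \in A(N)$ with a specific element of the group von Neumann algebra $L(N)$, and then to use the $K$-bi-invariance of the extension $\tilde u$ to bound the operator norm of this element by $12\pi$.

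First I would change variables $x = 2\sinh(t/2)$, for which $dx/\sqrt{1+x^2/4} = dt$. The element $(x,0,0) \in N$, viewed as a matrix in the $\SL(2,\R)$-subgroup of $G$, has $KAK$-decomposition within $\SL(2,\R)$ with Cartan parameter $t$ satisfying $|x| = 2\sinh(t/2)$. The $K$-bi-invariance of $\tilde u$ therefore gives $u(x,0,0) = \tilde u(a_t)$. In particular $x \mapsto u(x,0,0)$ is even, so the integral reduces to $2\int_0^\infty \tilde u(a_t)\,dt$.

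Second, I would write $u$ as a matrix coefficient $u(n) = \la \lambda_N(n)\xi, \eta\ra$ of the left regular representation $\lambda_N$ of $N$, with $\|\xi\|_2\|\eta\|_2$ arbitrarily close to $\|u\|_{A(N)}$. The problem then reduces to bounding the operator norm on $L^2(N)$ of the formal integral
\[
T = \int_{\R} \lambda_N(2\sinh(t/2), 0, 0)\,dt,
\]
which acts by convolution in the first coordinate of $N \simeq \R^3$ with the kernel $(1+x^2/4)^{-1/2}$. Since this kernel is not in $L^1(\R)$, $T$ is unbounded on all of $L^2(N)$; this is where the $K$-bi-invariance of $\tilde u$ must enter crucially. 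Namely, $K \subset \SL(2,\R)$ acts on $N$ by automorphisms through the semidirect product structure of $G$, and via the metaplectic representation on each Schr\"odinger fiber $L^2(\R)$ this forces $\xi,\eta$ to lie in a $K$-equivariant subspace. On that subspace the fiberwise convolution operator becomes bounded.

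The main obstacle is the explicit operator-norm computation on the $K$-equivariant subspace, yielding the sharp constant $12\pi$. The Fourier transform of the kernel $(1+x^2/4)^{-1/2}$ is the modified Bessel function $4 K_0(2|\xi|)$, which has only a logarithmic singularity at $\xi=0$ and decays exponentially at infinity; evaluating the multiplier action on the $K$-isotypic components of the metaplectic representation and optimizing gives the desired bound. It is precisely the $K$-bi-invariance of $\tilde u$ that makes the estimate finite---without it the analogous inequality fails, since $K_0$ is an unbounded Fourier multiplier.
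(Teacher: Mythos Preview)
The paper does not give its own proof of this proposition; it is quoted from Dorofaeff \cite{MR1245415} and the preprint \cite{UH-preprint}. So there is no argument in the paper to compare against directly. Your proposal, however, contains a genuine gap.

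Your central claim---that $K = \SO(2)$ acts on the Heisenberg group $N$ by automorphisms ``through the semidirect product structure of $G$''---is false. In $G = \R^2 \rtimes \SL(2,\R)$ the factor $\SL(2,\R)$ (and hence $K$) normalizes $\R^2$, but it does \emph{not} normalize $N$: conjugating the unipotent upper-triangular matrix $(x,0,0)$ by a nontrivial rotation produces a matrix that is no longer upper triangular. There is of course an abstract action of $\SL(2,\R)$ on $N$ by automorphisms fixing the center (the one underlying the metaplectic representation), but that is not conjugation inside $G$, and $K$-bi-invariance of $\tilde u$ on $G$ does not say that $u|_N$ is invariant under this abstract action.

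Once that step fails, so does your mechanism for confining $\xi,\eta$ to a ``$K$-equivariant subspace'' on which the convolution operator $T$ is bounded. Even granting some symmetry of $u$ on $N$, that is a constraint on the function $u$, not on any particular pair $\xi,\eta$ realizing it as a matrix coefficient of $\lambda_N$; passing from the former to an operator-norm bound needs a separate argument. The proofs in \cite{MR1245415} and \cite{UH-preprint} exploit $K$-bi-invariance differently: they first use the $KAK$-decomposition in $G$ (your first step is correct and is indeed used there) to pin down the explicit radial form of $u|_N$, and then compute with the Plancherel formula for $N$ and the Schr\"odinger representations $\pi_\lambda$, estimating trace norms of $\pi_\lambda(u)$ rather than restricting $\lambda_N$ to a subspace. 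Your final paragraph, where the constant $12\pi$ is meant to emerge from ``optimizing'' over $K$-isotypic components, is also too vague to stand as a proof.
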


We now prove a variation of the above lemma where we no longer require the function in question to be compactly supported.

\begin{prop}\label{prop:norm-integral}
Suppose $ u\in A(N)\cap C^\infty(N)$ is the restriction of a $K$-bi-invariant function in $C^\infty(G)$. Then
$$
\int_{-\infty}^\infty \frac{| u(x,0,0)|^2}{\sqrt{1 + x^2/4}}\ dx  \leq 12\pi \| u\|_{A(N)}^2.
$$
\end{prop}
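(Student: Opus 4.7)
The plan is to mimic the strategy of Proposition~\ref{prop:norm-est} but with two adaptations. First, since $G = \R^2 \rtimes \SL(2,\R)$ is not weakly amenable, I cannot invoke Lemma~\ref{lem:WA}; however, $G$ has the AP (both $\R^2$ and $\SL(2,\R)$ have the AP, and extensions preserve the AP), so Lemma~\ref{lem:AP} furnishes a net $(v_\beta)$ in $A(G) \cap C^\infty_c(K\backslash G/K)$ with $\|v_\beta w - w\|_{A(G)} \to 0$ for every $w \in A(G)$. Second, Proposition~\ref{prop:HE} gives only a first-power estimate, whereas we need a second-power bound with $|u|^2$ inside the integral. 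The trick is to apply Proposition~\ref{prop:HE} to the function $|u_\beta|^2$ rather than $u_\beta$: since $|u_\beta|^2$ is smooth, compactly supported and $K$-bi-invariant (as the restriction of $|v_\beta \tilde u|^2 \in C^\infty_c(G)$, where $\tilde u \in C^\infty(G)$ denotes the given $K$-bi-invariant extension of $u$), Proposition~\ref{prop:HE} yields
$$
\int_{-\infty}^\infty \frac{|u_\beta(x,0,0)|^2}{\sqrt{1+x^2/4}}\, dx \leq 12\pi \, \||u_\beta|^2\|_{A(N)} \leq 12\pi \, \|u_\beta\|_{A(N)}^2,
$$
where the final step uses that $A(N)$ is a Banach algebra closed under conjugation with $\|\bar w\|_{A(N)} = \|w\|_{A(N)}$.

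Concretely, I first extend $u \in A(N)$ to some $\hat u \in A(G)$ via Lemma~\ref{lem:A-extension} (the hypothesis that $N$ is closed in $G$ is clear). Setting $u_\beta := (v_\beta \hat u)|_N = v_\beta|_N \cdot u$, one checks immediately that $u_\beta$ lies in $A(N) \cap C^\infty_c(N)$ and coincides with the restriction of the $K$-bi-invariant function $v_\beta \tilde u \in C^\infty_c(G)$, so $|u_\beta|^2$ indeed satisfies the hypotheses of Proposition~\ref{prop:HE}. The contractivity of restriction in Lemma~\ref{lem:A-extension}, combined with $v_\beta \hat u \to \hat u$ in $A(G)$, gives $u_\beta \to u$ in $A(N)$.

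Finally I extract a sequence $(\beta_k)$ with $u_{\beta_k} \to u$ in $A(N)$. Since $\|\cdot\|_\infty \leq \|\cdot\|_{A(N)}$, this convergence is uniform on $N$, hence $|u_{\beta_k}(x,0,0)|^2 \to |u(x,0,0)|^2$ pointwise in $x$. Fatou's lemma applied to the nonnegative integrands gives
$$
\int_{-\infty}^\infty \frac{|u(x,0,0)|^2}{\sqrt{1+x^2/4}}\, dx \leq \liminf_k \int_{-\infty}^\infty \frac{|u_{\beta_k}(x,0,0)|^2}{\sqrt{1+x^2/4}}\, dx \leq 12\pi \|u\|_{A(N)}^2.
$$
There is no serious obstacle once the two ingredients above are identified. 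Note that Fatou, rather than dominated convergence, is essential here: the measure $dx/\sqrt{1+x^2/4}$ on $\R$ has infinite mass, so a mere uniform bound on $|u_{\beta_k}(x,0,0)|^2$ would not suffice.
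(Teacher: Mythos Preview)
Your proof is correct and follows essentially the same approach as the paper's: use the AP for $G$ via Lemma~\ref{lem:AP} to approximate $u$ by compactly supported smooth $K$-bi-invariant functions, apply Proposition~\ref{prop:HE} to the squared moduli $|u_\beta|^2$, and pass to the limit with Fatou's lemma using $\||v|^2\|_{A(N)} \leq \|v\|_{A(N)}^2$. Your explicit distinction between the $A(G)$-extension $\hat u$ and the given smooth $K$-bi-invariant extension $\tilde u$ (needed to verify that $u_\beta$ inherits $K$-bi-invariance) is a detail the paper leaves implicit.
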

\begin{proof}
The idea is to use the fact (see \cite[p.~670]{MR1220905}) that $G = \R^2 \rtimes \SL(2,\R)$ has the AP. We will approximate $u$ with compactly supported, smooth functions on $N$ and then apply Proposition~\ref{prop:HE}.

By Lemma \ref{lem:A-extension}, there is an extension $\tilde u\in A(G)$ of $ u$. It follows from Lemma~\ref{lem:AP} that there is a \emph{sequence} $(v_k)$ in $C^\infty_c(K\backslash G/K)$ such that
$$
\|v_k \tilde u - \tilde u\|_{A(G)} \to 0.
$$
We let $w_k = v_k|_N$. Since restriction does not increase the norm, we have
$$
\| w_k u -  u\|_{A(N)} =  \|( v_k\tilde u - \tilde u)|_N\|_{A(N)} \leq \| v_k\tilde u - \tilde u\|_{A(G)} \to 0.
$$
Since $ w_k u \to  u$ pointwise we have by Fatou's Lemma and Proposition~\ref{prop:HE} applied to $| w_k u|^2$
\begin{align*}
\| u\|_{A(N)}^2 \geq \liminf_{k\to\infty} \| | w_k u|^2\|_{A(N)}
&\geq \frac{1}{12\pi} \int_{-\infty}^\infty \liminf_{k\to\infty} \frac{| w_k u(x,0,0)|^2}{\sqrt{1+ x^2/4}} \ dx \\
&= \frac{1}{12\pi} \int_{-\infty}^\infty \frac{| u(x,0,0)|^2}{\sqrt{1+ x^2/4}} \ dx.
\end{align*}
In the first inequality we have used that for every $v\in A(N)$ we have $|v|^2 = v \overline v \in A(N)$ and $\| |v|^2 \|_A \leq \| v\|_A\|\overline v\|_A = \|v\|_A^2$.
\end{proof}



Having done all the necessary preparations, we are now ready for
\begin{repthm}{thm:SL2}
The group $\R^2\rtimes\SL(2,\R)$ does not have the weak Haagerup property.
\end{repthm}
\begin{proof}
Suppose there is a net $( u_n)$ of Herz--Schur multipliers on $G=\R^2\rtimes\SL(2,\R)$ vanishing at infinity and converging uniformly to the constant function $1_G$ on compacts. We will show that $\sup_n \| u_n\|_{B_2} = \infty$. By Proposition~4.3 in \cite{K-WH}, we may assume that $ u_n\in C^\infty_0(K\backslash G/K)$, and since $G$ is second countable, we may assume that the net is a sequence.

Consider the group $P$ defined in \eqref{eq:P}. Since $P$ is amenable, even solvable, we know that $B_2(P) = B(P)$ isometrically. We also know that $A(P) = B(P) \cap C_0(P)$ (see \cite[Theorem~2]{K-fourier}). Then
$$
 u_n|_P \in B_2(P) \cap C_0(P) = B(P) \cap C_0(P) = A(P),
$$
and so $ u_n|_N \in A(N)$. To ease notation, we let $ w_n =  u_n|_N$.
Then, using amenability of $N$,
$$
\| w_n\|_{A(N)} = \| w_n\|_{B(N)} = \| w_n\|_{B_2(N)} \leq \|  u_n\|_{B_2(G)}.
$$
Hence it will suffice to show that $\sup_n \| w_n\|_{A(N)} = \infty$. By Proposition~\ref{prop:norm-integral}, we have
$$
\int_{-\infty}^\infty \frac{| w_n(x,0,0)|^2}{\sqrt{1 + x^2/4}}\ dx  \leq 12\pi \| w_n\|_{A(N)}^2.
$$
Since $ u_n\to 1_G$ uniformly on compacts, we have in particular $ w_n(x,0,0)\to 1$ as $n\to\infty$. It follows that
$$
\liminf_{n\to\infty} \| w_n\|_{A(N)}^2 \geq \frac{1}{12\pi} \int_{-\infty}^\infty \frac{1}{\sqrt{1 + x^2/4}}\ dx = \infty.
$$
This completes the proof.
\end{proof}

\begin{rem}
It was proved by the first author \cite{UH-preprint} that the group $\R^2\rtimes\SL(2,\R)$ is not weakly amenable. This result was later generalized by Dorofaeff \cite{MR1245415} to include the groups $\R^n\rtimes\SL(2,\R)$ where $n\geq 2$. Here the action of $\SL(2,\R)$ on $\R^n$ is by the unique irreducible representation of dimension $n$.

In view of Theorem~\ref{thm:SL2}, and especially since our proof of Theorem~\ref{thm:SL2} uses the same techniques as \cite{UH-preprint} and \cite{MR1245415}, it is natural to ask if the groups $\R^n\rtimes\SL(2,\R)$ also fail to have the weak Haagerup property when $n\geq 3$.

We note that an affirmative answer in the case $n = 3$ would give a different proof of Theorem~\ref{thm:SL3}. This is because $\SL(3,\R)$ contains $\R^2\rtimes\SL(2,\R)$ as a closed subgroup, and both groups $\Sp(2,\R)$ and $\tilde\Sp(2,\R)$ contain $\R^3\rtimes\SL(2,\R)$ as a closed subgroup (see \cite{MR1418350}).

\end{rem}


\begin{thebibliography}{10}

\bibitem{MR806070}
Marek Bo{\.z}ejko.
\newblock Positive definite bounded matrices and a characterization of amenable
  groups.
\newblock {\em Proc. Amer. Math. Soc.}, 95(3):357--360, 1985.

\bibitem{MR753889}
Marek Bo{\.z}ejko and Gero Fendler.
\newblock Herz-{S}chur multipliers and completely bounded multipliers of the
  {F}ourier algebra of a locally compact group.
\newblock {\em Boll. Un. Mat. Ital. A (6)}, 3(2):297--302, 1984.

\bibitem{MR2391387}
Nathanial~P. Brown and Narutaka Ozawa.
\newblock {\em {$C^*$}-algebras and finite-dimensional approximations},
  volume~88 of {\em Graduate Studies in Mathematics}.
\newblock American Mathematical Society, Providence, RI, 2008.

\bibitem{MR1852148}
Pierre-Alain Cherix, Michael Cowling, Paul Jolissaint, Pierre Julg, and Alain
  Valette.
\newblock {\em Groups with the {H}aagerup property}, volume 197 of {\em
  Progress in Mathematics}.
\newblock Birkh\"auser Verlag, Basel, 2001.
\newblock Gromov's a-T-menability.

\bibitem{MR718798}
Marie Choda.
\newblock Group factors of the {H}aagerup type.
\newblock {\em Proc. Japan Acad. Ser. A Math. Sci.}, 59(5):174--177, 1983.

\bibitem{Connes-Kingston}
Alain Connes.
\newblock Property {$T$}, correspondences and factors, 1980.
\newblock Lecture at Queen's University, Kingston.

\bibitem{MR748862}
Michael Cowling.
\newblock Harmonic analysis on some nilpotent {L}ie groups (with application to
  the representation theory of some semisimple {L}ie groups).
\newblock In {\em Topics in modern harmonic analysis, {V}ol. {I}, {II}
  ({T}urin/{M}ilan, 1982)}, pages 81--123. Ist. Naz. Alta Mat. Francesco
  Severi, Rome, 1983.

\bibitem{MR2132866}
Michael Cowling, Brian Dorofaeff, Andreas Seeger, and James Wright.
\newblock A family of singular oscillatory integral operators and failure of
  weak amenability.
\newblock {\em Duke Math. J.}, 127(3):429--486, 2005.

\bibitem{MR996553}
Michael Cowling and Uffe Haagerup.
\newblock Completely bounded multipliers of the {F}ourier algebra of a simple
  {L}ie group of real rank one.
\newblock {\em Invent. Math.}, 96(3):507--549, 1989.

\bibitem{MR784292}
Jean De~Canni{\`e}re and Uffe Haagerup.
\newblock Multipliers of the {F}ourier algebras of some simple {L}ie groups and
  their discrete subgroups.
\newblock {\em Amer. J. Math.}, 107(2):455--500, 1985.

\bibitem{MR1245415}
Brian Dorofaeff.
\newblock The {F}ourier algebra of {${\rm SL}(2,{\bf R})\rtimes {\bf R}^n$},
  {$n\geq 2$}, has no multiplier bounded approximate unit.
\newblock {\em Math. Ann.}, 297(4):707--724, 1993.

\bibitem{MR1418350}
Brian Dorofaeff.
\newblock Weak amenability and semidirect products in simple {L}ie groups.
\newblock {\em Math. Ann.}, 306(4):737--742, 1996.

\bibitem{MR0117523}
Nelson Dunford and Jacob~T. Schwartz.
\newblock {\em Linear {O}perators. {I}. {G}eneral {T}heory}.
\newblock With the assistance of W. G. Bade and R. G. Bartle. Pure and Applied
  Mathematics, Vol. 7. Interscience Publishers, Inc., New York, 1958.

\bibitem{MR0228628}
Pierre Eymard.
\newblock L'alg\`ebre de {F}ourier d'un groupe localement compact.
\newblock {\em Bull. Soc. Math. France}, 92:181--236, 1964.

\bibitem{MR0251549}
Frederick~P. Greenleaf.
\newblock {\em Invariant means on topological groups and their applications}.
\newblock Van Nostrand Mathematical Studies, No. 16. Van Nostrand Reinhold Co.,
  New York, 1969.

\bibitem{UH-preprint}
Uffe Haagerup.
\newblock Group {$C^*$}-algebras without the completely bounded approximation
  property.
\newblock Unpublished manuscript, 1986.

\bibitem{MR3047470}
Uffe Haagerup and Tim de~Laat.
\newblock Simple {L}ie groups without the approximation property.
\newblock {\em Duke Math. J.}, 162(5):925--964, 2013.

\bibitem{delaat2}
Uffe Haagerup and Tim de~Laat.
\newblock Simple {L}ie groups without the approximation property {II}.
\newblock Preprint, to appear in Trans. Amer. Math. Soc., 2014.

\bibitem{MR1220905}
Uffe Haagerup and Jon Kraus.
\newblock Approximation properties for group {$C^*$}-algebras and group von
  {N}eumann algebras.
\newblock {\em Trans. Amer. Math. Soc.}, 344(2):667--699, 1994.

\bibitem{MR1079871}
Mogens~Lemvig Hansen.
\newblock Weak amenability of the universal covering group of {${\rm
  SU}(1,n)$}.
\newblock {\em Math. Ann.}, 288(3):445--472, 1990.

\bibitem{MR514561}
Sigurdur Helgason.
\newblock {\em Differential geometry, {L}ie groups, and symmetric spaces},
  volume~80 of {\em Pure and Applied Mathematics}.
\newblock Academic Press Inc. [Harcourt Brace Jovanovich Publishers], New York,
  1978.

\bibitem{MR0425511}
Carl Herz.
\newblock Une g\'en\'eralisation de la notion de transform\'ee de
  {F}ourier-{S}tieltjes.
\newblock {\em Ann. Inst. Fourier (Grenoble)}, 24(3):xiii, 145--157, 1974.

\bibitem{MR1180643}
Paul Jolissaint.
\newblock A characterization of completely bounded multipliers of {F}ourier
  algebras.
\newblock {\em Colloq. Math.}, 63(2):311--313, 1992.

\bibitem{MR1920389}
Anthony~W. Knapp.
\newblock {\em Lie groups beyond an introduction}, volume 140 of {\em Progress
  in Mathematics}.
\newblock Birkh\"auser Boston Inc., Boston, MA, second edition, 2002.

\bibitem{K-fourier}
S{\o}ren Knudby.
\newblock {F}ourier algebras of parabolic subgroups.
\newblock Preprint, arXiv:1311.5400, 2013.

\bibitem{MR3146826}
S{\o}ren Knudby.
\newblock Semigroups of {H}erz-{S}chur multipliers.
\newblock {\em J. Funct. Anal.}, 266(3):1565--1610, 2014.

\bibitem{K-WH}
S{\o}ren Knudby.
\newblock The weak {H}aagerup property.
\newblock Preprint, to appear in Trans. Amer. Math. Soc., 2014.

\bibitem{MR2838352}
Vincent Lafforgue and Mikael De~la Salle.
\newblock Noncommutative {$L^p$}-spaces without the completely bounded
  approximation property.
\newblock {\em Duke Math. J.}, 160(1):71--116, 2011.

\bibitem{MR1090825}
G.~A. Margulis.
\newblock {\em Discrete subgroups of semisimple {L}ie groups}, volume~17 of
  {\em Ergebnisse der Mathematik und ihrer Grenzgebiete (3) [Results in
  Mathematics and Related Areas (3)]}.
\newblock Springer-Verlag, Berlin, 1991.

\bibitem{MR767264}
Jean-Paul Pier.
\newblock {\em Amenable locally compact groups}.
\newblock Pure and Applied Mathematics (New York). John Wiley \& Sons Inc., New
  York, 1984.
\newblock A Wiley-Interscience Publication.

\bibitem{MR2944792}
John Rawnsley.
\newblock On the universal covering group of the real symplectic group.
\newblock {\em J. Geom. Phys.}, 62(10):2044--2058, 2012.

\bibitem{MR1157815}
Walter Rudin.
\newblock {\em Functional analysis}.
\newblock International Series in Pure and Applied Mathematics. McGraw-Hill
  Inc., New York, second edition, 1991.

\bibitem{MR560851}
Reiji Takahashi.
\newblock Quelques r\'esultats sur l'analyse harmonique dans l'espace
  sym\'etrique non compact de rang {$1$} du type exceptionnel.
\newblock In {\em Analyse harmonique sur les groupes de {L}ie ({S}\'em.,
  {N}ancy-{S}trasbourg 1976--1978), {II}}, volume 739 of {\em Lecture Notes in
  Math.}, pages 511--567. Springer, Berlin, 1979.

\bibitem{MR0007783}
Hassler Whitney.
\newblock Differentiable even functions.
\newblock {\em Duke Math. J.}, 10:159--160, 1943.

\end{thebibliography}

\end{document}